\newtheorem{theorem}{Theorem}[section]
\newtheorem{lemma}[theorem]{Lemma}
\newtheorem{prop}[theorem]{Proposition}
\newtheorem{coro}[theorem]{Corollary}
\theoremstyle{definition}
\newtheorem{rem}[theorem]{Remark}
\newtheorem{exa}[theorem]{Example}
\newcommand\BB{\mathcal B}
\newcommand\card{\mathrm{card}}
\newcommand\NN{\mathbb{N}}
\newcommand\ZZ{\mathbb Z}
\newcommand\QQ{\mathbb{Q}}
\newcommand\CC{\mathbb{C}}
\newcommand\FF{\mathbb{F}}
\newcommand\PP{\mathbb{P}}
\renewcommand\AA{\mathbb{A}}
\DeclareMathOperator{\Hilb}{Hilb}
\DeclareMathOperator{\Spec}{Spec}
\numberwithin{equation}{section}
\title[Counting ideals]
{Counting the ideals of given codimension of\\ 
the algebra of Laurent polynomials in two variables}
\author{Christian Kassel}
\address{Christian Kassel: 
Universit\'e de Strasbourg, CNRS, IRMA UMR 7501, F--67000 Strasbourg, France}
\email{kassel@math.unistra.fr}
\urladdr{www-irma.u-strasbg.fr/\raise-2pt\hbox{\~{}}kassel/}
\author{Christophe Reutenauer}
\address{Christophe Reutenauer:
Math\'ematiques, Universit\'e du Qu\'ebec \`a Montr\'eal,
Montr\'eal, CP 8888, succ.\ Centre Ville, Canada H3C 3P8}
\email{reutenauer.christophe@uqam.ca}
\urladdr{www.lacim.uqam.ca/\raise-2pt\hbox{\~{}}christo/}
\keywords{Laurent polynomial ring, ideal, partition, divisor, enumeration, Hilbert scheme}
\subjclass[2010]{(Primary)
05A17, 
13F20, 
14C05, 
14N10, 
16S34 
(Secondary)
05A30, 
11P84, 
11T55, 
13P10, 
14G15. 
}
\begin{document}

\begin{abstract}
We establish an explicit formula for the number~$C_n(q)$ of ideals of codimension~$n$ of 
the algebra $\FF_q[x,y,x^{-1}, y^{-1}]$ of Laurent polynomials in two variables over a finite field~$\FF_q$ of cardinality~$q$. 
This number is a palindromic polynomial of degree~$2n$ in~$q$. 
Moreover, $C_n(q) = (q-1)^2 P_n(q)$, where $P_n(q)$ is another palindromic polynomial;
the latter is a $q$-analogue of the sum of divisors of~$n$, 
which happens to be the number of subgroups of~$\ZZ^2$ of index~$n$.
\end{abstract}

\maketitle

\section{Introduction}

Let $\FF_q$ be a finite field of cardinality~$q$ and $\FF_q[x,y,x^{-1}, y^{-1}]$ be the algebra of
Laurent polynomials in two variables with coefficients in~$\FF_q$.

Our main aim is to give a formula for the number $C_n(q)$ of ideals of codimension~$n$
of~$\FF_q[x,y,x^{-1}, y^{-1}]$. Our main result is the following.

\begin{theorem}\label{th-Cnq}
For each integer $n\geq1$ we have
\begin{equation*}
C_n(q) = \sum_{\lambda \,\vdash n} \,
(q-1)^{2 v(\lambda)} \, q^{n - \ell(\lambda)} \,\prod_{i= 1, \ldots, t \atop d_i \geq 1} \, \frac{q^{2d_i} - 1}{q^2 - 1} \, ,
\end{equation*}
where the sum runs over all partitions~$\lambda$ of~$n$. 
The expression~$C_n(q)$ is a monic polynomial of degree~$2n$ in the variable~$q$ with integer coefficients.
Moreover, the polynomial~$C_n(q)$ is divisible by~$(q-1)^2$.
\end{theorem}

The notation $\ell(\lambda)$, $\nu(\lambda)$, $d_i$ appearing in the formula will be explained 
in Section\,\ref{ssec-cells}.
The proof of the theorem will be given in Section\,\ref{proof-th-inv-cell};
it relies on a parametrization by Conca and Valla\,\cite{CV} of the affine cells in the Ellingsrud--Str\o mme
decomposition of the Hilbert scheme of $n$~points on the affine plane.

Note that since $C_n(q)$ is divisible by~$(q-1)^2$, we may define for each $n\geq 1$
a unique polynomial~$P_n(q)$ by
\begin{equation}\label{def-Pnq}
C_n(q) = (q-1)^2 P_n(q),
\end{equation}
which clearly implies $C_n(1) = 0$ for all $n\geq 1$.
Table\,\ref{tableC} (resp.\ Table\,\ref{tableP}) at the end of the paper 
displays the polynomials~$C_n(q)$ (resp.\ the polynomials~$P_n(q)$) for $n \leq 12$.

Theorem\,\ref{th-Cnq} has two interesting consequences.
The first one concerns the polynomials~$P_n(q)$. Let us state it.

\begin{coro}\label{cor-Pnq}
For each $n\geq 1$ the polynomial $P_n(q)$ is a monic polynomial of degree~$2n-2$ with integer coefficients
and we have 
\begin{equation*}
P_n(1) = \sigma(n) = \sum_{d | n\, ;\, d\geq1} \, d .
\end{equation*}
\end{coro}

As is well known, the sum~$\sigma(n)$ of positive divisors of~$n$
is equal to the number of subgroups of index~$n$ of the free abelian group~$\ZZ^2$ of rank~two. 
Thus Theorem\,\ref{th-Cnq} and Corollary\,\ref{cor-Pnq} imply that the number of ideals of codimension~$n$ of 
the Laurent polynomial algebra~$\FF_q[x,y,x^{-1},y^{-1}]$, i.e. of the algebra of the group~$\ZZ^2$,
is, up to the factor $(q-1)^2$, a $q$-analogue\footnote{By a $q$-analogue of an integer~$r$ we mean
a polynomial $P(q)$ in the variable~$q$ such that $P(1) = r$.} 
of the number of subgroups of index~$n$ of~$\ZZ^2$.

A similar phenomenon had been observed by Bacher and the second-named author in\,\cite{BR2}:
up to a power of~$q-1$, the number of right ideals of codimension~$n$
of the algebra~$\FF_q[F_2]$ of the rank~two free group~$F_2$ is a
$q$-analogue of the number of subgroups of index~$n$ of~$F_2$.
Actually it was this observation that prompted us to compute the number of ideals of codimension~$n$
of the algebra~$\FF_q[\ZZ^2]$ of the free abelian group~$\ZZ^2$, i.e. of~$\FF_q[x,y,x^{-1}, y^{-1}]$.

In a similar context, the following holds.

(a) By~\cite{ES1} (see also Section~\ref{ssec-cells} below)
the number of ideals of codimension~$n$ of the polynomial algebra~$\FF_q[x,y]$, 
which is the algebra of the free abelian monoid~$\NN^2$, is
a $q$-analogue of the number~$p(n)$ of partitions of~$n$; 
as is well known,
the latter is equal to the number of ideals of the monoid~$\NN^2$
whose complement is of cardinality~$n$.

(b) In a non-commutative setting, by~\cite{Re, BR1}, the number of right ideals of codimension~$n$ 
of the free algebra~$\FF_q\langle x,y\rangle$
is a $q$-analogue of the number of right ideals of the free monoid $\langle x,y\rangle^*$
whose complement is of cardinality~$n$.

(c) It may be shown that the number of right ideals of codimension~$2$ of the algebra~$\FF_q[F_3]$ of
the rank~three free group~$F_3$ is equal to 
\[
q^2(q-1)^5 \left( (q+1)^3-1 \right).
\] 
The last factor is obviously a $q$-analogue of $2^3-1 = 7$, which is the number of subgroups of index~$2$ of~$F_3$. 

We conjecture the number of right ideals of codimension~$2$ of the algebra~$\FF_q[F_r]$ of 
the free group~$F_r$ with $r$~generators to be of the form $q^i(q-1)^j \left((q+1)^r-1 \right)$ for some non-negative integers $i,j$; 
the last factor is then a $q$-analogue of the number $2^r-1$ of subgroups of index~$2$ of~$F_r$.
More generally, we expect the number of right ideals of codimension~$n$ of~$\FF_q[F_r]$, up to a power of $q-1$, to be 
a $q$-analogue of the number of subgroups of index~$n$ of~$F_r$ 
(see also the conclusion of\,\cite{BR2}).

\begin{rem} 
The commutative algebra $L_r = \FF_q[x_1,x_1^{-1},\ldots, x_r,x_r^{-1}]$ of Laurent polynomials in $r$~variables ($r\geq 3$)
provides a distinct contrast with the cases discussed above. 
We can show that the number of right ideals of codimension~$2$ of~$L_r$, 
which is the algebra of the free abelian group~$\ZZ^r$, is equal to $(q-1)^r R_r(q)$, where 
\[
R_r(q)=\frac{1}{2} \left( (q+1)^r+(q-1)^r \right)+\frac{q^r-1}{q-1}-1.
\]
The latter is a $q$-analogue of $R_r(1)=2^{r-1}+r-1$. Now the number of subgroups of index~$2$ of~$\ZZ^r$
is equal to~$2^r-1$, which is different from~$R_r(1)$ when $r\geq 3$.
\end{rem}

The second consequence of Theorem\,\ref{th-Cnq} expresses the generating function of the polynomials~$C_n(q)$
as a nice infinite product.

\begin{coro}\label{cor-inf_prod}
(a) We have
\begin{equation*}
1 + \sum_{n\geq 1} \, \frac{C_n(q)}{q^n}  \, t^n
= \prod_{i\geq 1}\, \frac{(1-t^i)^2}{1-(q+q^{-1})t^i + t^{2i}} .
\end{equation*}

(b) The polynomials $C_n(q)$ and $P_n(q)$ are palindromic.
\end{coro}

The previous infinite product shows up in\,\cite[p.\,10]{Fi} (see for instance Equations\,(9.2) and\,(10.1))
and probably in other papers on basic hypergeometric series;
in an algebraic geometry context it appears in\,\cite[Th.\,4.1.3]{HLR2}, 
where it is equal to the generating function
of the $E$-polynomials of the punctual Hilbert schemes of the complex two-dimensional torus
(see details in Section\,\ref{ssec-altern} below).

Using Corollary\,\ref{cor-inf_prod}, we gave explicit expressions for the coefficients
of the polynomials~$C_n(q)$ and~$P_n(q)$ in the companion paper\,\cite{KRinf}
(see Theorems\,1.1 and\,1.2 in \emph{loc. cit.}). 
We obtained a rather striking positivity result, namely the coefficients of~$P_n(q)$ are all \emph{non-negative} integers. 
For the sake of completeness we recall our formulas for the coefficients
of the polynomials~$C_n(q)$ and~$P_n(q)$ in Appendix\,\ref{app}.
\\

The paper is organized as follows. Section\,\ref{sec-prelim} is devoted to some preliminaries:
we first recall the one-to-one correspondence between the ideals of the localization~$S^{-1}A$ of an algebra~$A$
and certain ideals of~$A$; we also count tuples of polynomials subject to certain constraints over a finite field.

In Section\,\ref{sec-cells} we recall Conca and Valla's parametrization of the affine cells in 
a decomposition of the Hilbert scheme of $n$~points in the plane;
these cells are indexed by the partitions of~$n$.
We show how to deduce a parametrization of the cells in the induced decomposition of the Hilbert scheme 
of $n$~points in a Zariski open subset of the plane.

In Section\,\ref{sec-semi-inv} we apply the techniques of the preceding section to compute the number of
ideals of codimension~$n$ of~$\FF_q[x,y,y^{-1}]$. 
In passing we give a criterion (Proposition\,\ref{prop-criterion-y}) which will also be used in the proof of Theorem~\ref{th-Cnq}.

In Section\,\ref{sec-inv} we define what we call an invertible Gr\"obner cell, 
which is a Zariski open subset of the corresponding affine cell, 
and compute its cardinality over a finite field. We derive a proof of Theorem~\ref{th-Cnq}. 

The proofs of Corollary\,\ref{cor-inf_prod} of and of Corollary\,\ref{cor-Pnq} are given in Section\,\ref{sec-proof-main}.

In Appendix\,\ref{app} we briefly recall the results on the coefficients of~$C_n(q)$ and~$P_n(q)$ 
we obtained in\,\cite{KRinf}.

\section{Preliminaries}\label{sec-prelim}

We fix a ground field~$k$. By algebra we mean an associative unital $k$-algebra. 
In this paper all algebras are assumed to be \emph{commutative}.

\subsection{Ideals in localizations}\label{ssec-localization}

Let $A$ be a (commutative) algebra, $S$ a multiplicative submonoid of~$A$ not containing~$0$,
and $S^{-1}A$ the corresponding localization of~$A$. 
We assume that the canonical algebra map $i: A \to S^{-1}A$ is injective (this is the case, for instance, when $A$ is a domain).

Recall the well-known correspondence between the ideals of~$S^{-1}A$ and those of~$A$ 
(see \cite[Chap.~2, \S~2, n$^{\text{o}}$~4--5]{Bo},~\cite[Prop.\,2.2]{Eis}).

\begin{itemize}
\item[(a)]
For any ideal~$J$ of~$S^{-1}A$, the set $i^{-1}(J) = J \cap A$ is an ideal of~$A$ and we have
$J = i^{-1}(J)S^{-1}A$.
The map $J \mapsto i^{-1}(J)$ is an injection from the set of ideals of~$S^{-1}A$ to the set of ideals of~$A$.

\item[(b)]
An ideal $I$ of~$A$ is of the form $i^{-1}(J)$ for some ideal~$J$ of~$S^{-1}A$ if and only if for all $s\in S$
the endomorphism of~$A/I$ induced by the multiplication by~$s$ is injective.
\end{itemize}

Given an integer $n\geq 1$, a $n$-\emph{codimensional} ideal of $A$ is an ideal such that $\dim_k A/I = n$. 
For such an ideal, the previous condition\,(b) is then equivalent to: 
for all $s\in S$, the endomorphism of~$A/I$ induced by the multiplication by~$s$ is a linear isomorphism.

We leave the proof of the following lemma to the reader.

\begin{lemma}
If $J$ is a finite-codimensional ideal of~$S^{-1} A$, then the canonical algebra map $i: A \to S^{-1}A$ induces an
algebra isomorphism
\begin{equation*}
A/i^{-1}(J) \cong (S^{-1}A)/J.
\end{equation*}
\end{lemma}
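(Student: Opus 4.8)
The plan is to show that the $k$-algebra homomorphism $\bar\imath \colon A/i^{-1}(J) \to (S^{-1}A)/J$ induced by~$i$ is both injective and surjective. Write $I = i^{-1}(J) = J \cap A$ and $B = (S^{-1}A)/J$. Since $i(I) \subseteq J$, the map~$i$ descends to a well-defined homomorphism $\bar\imath$ sending $a + I$ to $i(a) + J$; it is this map I would prove to be an isomorphism.

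Injectivity is immediate from the very definition of~$I$: if $\bar\imath(a + I) = 0$, then $i(a) \in J$, hence $a \in i^{-1}(J) = I$, so $\Ker \bar\imath = 0$. As a first consequence, $A/I$ embeds as a $k$-subalgebra of~$B$; since $J$ is finite-codimensional, $B$ is finite-dimensional over~$k$, and therefore so is $A/I$. This finiteness is exactly what makes the remainder of the argument work.

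For surjectivity, I would use that every element of $S^{-1}A$ can be written as $a/s = i(a)\,(1/s)$ with $a \in A$ and $s \in S$, so that its class in~$B$ equals $\overline{i(a)}\cdot \bar{s}^{\,-1}$, where $\bar{s}$ denotes the image of $i(s)$ in~$B$. This image is a unit in~$B$ because $s$ is invertible in $S^{-1}A$. The image of~$\bar\imath$ is the $k$-subalgebra $\bar A = (i(A)+J)/J$ of~$B$, and it contains~$\bar s$; the key point is to check that $\bar{s}^{\,-1}$ lies in~$\bar A$ as well. This follows from finite-dimensionality: the minimal polynomial of $\bar s$ over~$k$ has nonzero constant term (otherwise $\bar s$ would be a zero divisor and could not be a unit), so $\bar{s}^{\,-1}$ is expressed as a polynomial in $\bar s$ with coefficients in~$k$, and hence belongs to~$\bar A$. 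Equivalently, one may invoke property~(b): multiplication by~$s$ on the finite-dimensional space $A/I$ is injective, hence bijective. Consequently $\overline{i(a)}\cdot \bar{s}^{\,-1} \in \bar A$, every class in~$B$ is hit, and $\bar\imath$ is surjective.

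Combining the two gives the claimed isomorphism. The only genuine obstacle is surjectivity, and more precisely the verification that the inverses of the elements of~$S$ already lie in the image of~$A$; the finite-codimensionality hypothesis is precisely what supplies this, through the linear-algebra fact that an injective endomorphism of a finite-dimensional vector space is bijective.
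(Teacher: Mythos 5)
Your proof is correct. Note that the paper gives no proof of this lemma at all---it explicitly says ``we leave the proof of the following lemma to the reader''---so there is no in-paper argument to compare against; what one can check is whether your argument genuinely fills the gap, and it does. Injectivity of the induced map $\bar\imath$ is indeed immediate from $I = i^{-1}(J)$, and you correctly isolate the only substantive point: surjectivity reduces to showing that $\bar{s}^{\,-1}$ lies in the image subalgebra $\bar A = (i(A)+J)/J$ for every $s \in S$. Both of your mechanisms for this are sound. The minimal-polynomial argument works because a unit in any ring cannot be a zero divisor, so the minimal polynomial of $\bar s$ over $k$ (which exists since $B$ is finite-dimensional) has nonzero constant term, whence $\bar{s}^{\,-1} \in k[\bar s] \subseteq \bar A$. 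The alternative route via the paper's property (b) is in fact the one the surrounding text suggests: right after the lemma the paper observes that, for finite-codimensional ideals, condition (b) amounts to multiplication by $s$ being a linear isomorphism of $A/I$---and you correctly obtain the needed finite-dimensionality of $A/I$ from the injection into $(S^{-1}A)/J$ rather than assuming it. The only step you leave implicit in that second route is the passage from bijectivity of $\mu_s$ on $A/I$ to $\bar{s}^{\,-1} \in \bar A$: surjectivity of $\mu_s$ gives $b \in A$ with $sb \equiv 1 \pmod{I}$, hence $\bar s \cdot \overline{i(b)} = 1$ in $B$, so $\bar{s}^{\,-1} = \overline{i(b)} \in \bar A$. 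That is a one-line verification, not a gap.
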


It follows that there is a bijection between the set of $n$-codimensional ideals of~$S^{-1}A$ 
and the set of $n$-codimensional ideals~$I$ of~$A$ such that for all $s\in S$, 
the endomorphism of~$A/I$ induced by the multiplication by~$s$ is a linear isomorphism.
The latter assertion is equivalent to $s$ being invertible modulo~$I$, that is the image of~$s$ in~$A/I$
being invertible.

The following criterion will be used in Sections\,\ref{sec-semi-inv} and\,\ref{sec-inv}.

\begin{lemma}\label{lem-quot-by-s}
Let $A$ be a commutative algebra. For any $s \in A$,
let $p: A \to A/(s)$ be the natural projection onto the quotient algebra of~$A$ by the ideal generated by~$s$.
If $I$ is an ideal of~$A$, then $s$ is invertible modulo~$I$ if and only if $p(I) = A/(s)$.
\end{lemma}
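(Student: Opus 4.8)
The plan is to prove Lemma~\ref{lem-quot-by-s} by unwinding both conditions into statements about the ideal $I + (s)$ of~$A$ and showing they coincide. The key observation is that $s$ being invertible modulo~$I$ means there exists $a \in A$ with $as \equiv 1 \pmod I$, i.e.\ $as - 1 \in I$, which is exactly the assertion that $1 \in I + (s)$. On the other side, $p(I) = A/(s)$ means the image of~$I$ under the projection $p \colon A \to A/(s)$ is everything, which again says precisely that $1 \in p(I)$, i.e.\ that $1$ lies in $I + (s)$ modulo~$(s)$. So both conditions reduce to the single statement $I + (s) = A$.

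First I would establish the forward direction. Suppose $s$ is invertible modulo~$I$; then there is $a\in A$ with $as-1\in I$. Applying $p$ gives $p(as-1) = -p(1) + p(a)p(s)= -p(1)$, since $p(s)=0$ in $A/(s)$. Hence $p(1) = p(1-as) \in p(I)$, so the unit $p(1)$ of $A/(s)$ belongs to $p(I)$, forcing $p(I) = A/(s)$ because $p(I)$ is an ideal containing a unit.

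Conversely, suppose $p(I) = A/(s)$. Then $p(1)\in p(I)$, so there is some $b\in I$ with $p(b)=p(1)$, meaning $b - 1 \in (s)$, i.e.\ $b - 1 = cs$ for some $c\in A$. Thus $1 = b - cs$, and reducing modulo~$I$ (where $b\equiv 0$) yields $1 \equiv -cs \pmod I$, so $(-c)s \equiv 1 \pmod I$, exhibiting $-c$ as an inverse of~$s$ in $A/I$. This shows $s$ is invertible modulo~$I$ and completes the equivalence.

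I do not anticipate a genuine obstacle here, as the statement is essentially a bookkeeping identity among the three equivalent conditions $s$ invertible mod~$I$, \; $1\in I+(s)$, \; and $p(I)=A/(s)$. The only point requiring a modicum of care is the use of commutativity (so that $p(s)=0$ kills the $s$-term cleanly and multiplication by~$s$ is two-sided), which is harmless since the paper has declared all algebras commutative; and the standard fact that an ideal containing a unit is the whole ring.
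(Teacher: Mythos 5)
Your proof is correct and follows essentially the same route as the paper's: both directions translate invertibility modulo~$I$ (i.e.\ $st-1\in I$ for some~$t$) into $p(1)\in p(I)$ and back, using that an ideal of $A/(s)$ containing the unit is everything. The reformulation via $I+(s)=A$ is a harmless repackaging of the same argument.
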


\begin{proof}
If $s$ is invertible modulo~$I$, then there exists $t\in A$ such that $st - 1 \in I$. 
Hence, $p(1)$ belongs to~$p(I)$, which implies $p(I) = A/(s)$.
Conversely, if $p(I) = A/(s)$, then $p(1) = p(u)$ for some $u\in I$. Hence $1 - u \in (s)$,
which means that there is $t\in A$ such that $1 - u = st$. Thus, $st \equiv 1 \pmod{I}$.
\end{proof}

\subsection{Counting polynomials over a finite field}\label{ssec-counting}

In this subsection we assume that $k = \FF_q$ is a finite field of cardinality~$q$.
We shall need the following in Section\,\ref{sec-inv}.

\begin{prop}\label{prop-counting}
Let $d,h$ be integers~$\geq 1$ and $Q_1, \ldots, Q_h \in \FF_q[y]$ be coprime polynomials.
The number of~$(h+1)$-tuples $(P,P_1, \ldots, P_h)$ satisfying the three conditions
\begin{itemize}
\item[(i)]
$P$ is a degree~$d$ monic polynomial with $P(0) \neq 0$,

\item[(ii)]
$P_1, \ldots, P_h$ are polynomials of degree $<d$, and

\item[(iii)]
$P$ and $P_1 Q_1 + \cdots + P_hQ_h$ are coprime,
\end{itemize}
is equal to
\begin{equation*}
(q-1)^2 \, q^{(h-1)d} \, \frac{q^{2d} - 1}{q^2 - 1} \, .
\end{equation*}
\end{prop}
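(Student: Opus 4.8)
The plan is to fix the polynomial $P$ first, count the admissible tuples $(P_1,\dots,P_h)$ for that $P$, and only then sum over all eligible $P$. The crucial observation is that conditions (ii) and (iii) are best read modulo~$P$. Since each $P_i$ has degree $<d=\deg P$, the assignment $(P_1,\dots,P_h)\mapsto(\bar P_1,\dots,\bar P_h)$, where $\bar P_i$ denotes the class of $P_i$ in $\FF_q[y]/(P)$, is a bijection onto $(\FF_q[y]/(P))^h$. Moreover $\gcd\bigl(P,\,P_1Q_1+\cdots+P_hQ_h\bigr)=1$ if and only if the class $\sum_i \bar Q_i\bar P_i$ is a unit of $\FF_q[y]/(P)$, because coprimality with $P$ depends only on the residue modulo~$P$, and an element of $\FF_q[y]/(P)$ is invertible precisely when any representative is coprime to~$P$.

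This turns the inner count into a fibre count for the $\FF_q[y]/(P)$-linear map
\[
\Phi\colon (\FF_q[y]/(P))^h \longrightarrow \FF_q[y]/(P),\qquad (\bar P_1,\dots,\bar P_h)\longmapsto \sum_{i=1}^h \bar Q_i\,\bar P_i .
\]
The image of $\Phi$ is the ideal generated by $\bar Q_1,\dots,\bar Q_h$, and here the hypothesis that $Q_1,\dots,Q_h$ have no common factor enters: since $\FF_q[y]$ is a principal ideal domain, there are $A_i\in\FF_q[y]$ with $A_1Q_1+\cdots+A_hQ_h=1$, and reducing this identity modulo~$P$ shows $\bar 1$ lies in the image; hence $\Phi$ is surjective for every~$P$. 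Consequently every fibre of $\Phi$ is a coset of $\Ker\Phi$ and has cardinality $q^{hd}/q^d=q^{(h-1)d}$. The admissible tuples for a fixed~$P$ are exactly the preimages of the units, so their number is $q^{(h-1)d}\,\bigl|(\FF_q[y]/(P))^{\times}\bigr|$.

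Summing over all $P$ that are monic of degree~$d$ with $P(0)\neq0$ therefore reduces the whole count to $q^{(h-1)d}\,S_d$, where $S_d=\sum_P \Phi_P$ and $\Phi_P:=\bigl|(\FF_q[y]/(P))^{\times}\bigr|$ is the polynomial Euler totient. It thus remains to prove the arithmetic identity
\[
S_d=(q-1)^2\,\frac{q^{2d}-1}{q^2-1}\qquad(d\geq 1),
\]
which I regard as the main obstacle; the structural reduction above is routine once everything is read modulo~$P$.

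To establish this identity I would pass to generating functions over $\FF_q[y]$. From the convolution relation $|P|=\sum_{D\mid P}\Phi_D$ (the function-field analogue of $N=\sum_{e\mid N}\varphi(e)$) together with the two elementary series $\sum_{P}u^{\deg P}=(1-qu)^{-1}$ and $\sum_P|P|\,u^{\deg P}=(1-q^2u)^{-1}$, one obtains $\sum_P\Phi_P\,u^{\deg P}=(1-qu)/(1-q^2u)$. Restricting every sum to the $P$ with $P(0)\neq0$ amounts, in the Euler product, to deleting the factor attached to the prime~$y$: the first series becomes $(1-u)/(1-qu)$, the $|P|$-series becomes $(1-qu)/(1-q^2u)$, and the convolution relation persists on the sub-semigroup of polynomials prime to~$y$. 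Hence
\[
\sum_{P(0)\neq0}\Phi_P\,u^{\deg P}=\frac{(1-qu)^2}{(1-q^2u)(1-u)} .
\]
Subtracting the constant term $S_0=1$ and using $(1-qu)^2-(1-q^2u)(1-u)=(q-1)^2u$ gives $\sum_{d\geq1}S_d\,u^d=(q-1)^2u/\bigl[(1-q^2u)(1-u)\bigr]$, whose coefficient of $u^d$ is exactly $(q-1)^2\,(q^{2d}-1)/(q^2-1)$. Multiplying by $q^{(h-1)d}$ yields the stated formula.
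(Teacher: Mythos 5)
Your proof is correct and follows essentially the same route as the paper: your fibre count for $\Phi$ (surjectivity from B\'ezout applied to the coprime $Q_i$, all fibres of size $q^{(h-1)d}$) is the paper's Lemma~\ref{lem-counting}, and your identity for $S_d=\sum_P\card\,(\FF_q[y]/(P))^{\times}$ is precisely the paper's Lemma~\ref{lem-counting3}, so your ``fix $P$, count preimages of units, then sum over $P$'' is the paper's factorization of the count merely reorganized. The only real difference is how that key identity is established: the paper factors $P$ into irreducibles, computes $\card\,(\FF_q[y]/(\pi^k))^{\times}$ via the Chinese remainder theorem, and recognizes the Euler product as a ratio involving the zeta function of $\AA^1\setminus\{0\}$, whereas you derive the same rational function $(1-qu)^2/\bigl((1-u)(1-q^2u)\bigr)$ from the totient convolution $q^{\deg P}=\sum_{D\mid P}\Phi_D$ together with deletion of the Euler factor at $y$ --- an equally valid, equally elementary variant.
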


Before giving the proof, we state and prove two auxiliary lemmas.

\begin{lemma}\label{lem-counting}
Let $R$ be a finite commutative ring and $a_1, \ldots, a_h \in R$ such that $a_1 R + \cdots + a_h R = R$.
For any $b\in R$, the number of $h$-tuples $(x_1, \ldots, x_h) \in R^h$ such that 
$a_1 x_1 + \cdots + a_h x_h = b$ is equal to~$(\card\ R)^{h-1}$.
\end{lemma}

\begin{proof}
The map $(x_1, \ldots, x_h) \mapsto a_1 x_1 + \cdots + a_h x_h$ is a homomorphism $R^h \to R$ of
additive groups. 
Since it is surjective, the number of $h$-tuples satisfying the above condition is equal to the cardinality of its kernel,
which  is equal to $\card\ R^h/\card\ R = (\card\ R)^{h-1}$.
\end{proof}

\begin{lemma}\label{lem-counting3}
Let $d \geq 1$ be an integer.
The number of couples $(P,Q) \in \FF_q[y]^2$ such that $P$ is a degree~$d$ monic polynomial with $P(0) \neq 0$, 
$Q$ is of degree~$<d$, and $P$ and $Q$ are coprime is equal to 
\[
c_d = (q-1)^2 \, \frac{q^{2d}-1}{q^2-1}  \, .
\]
\end{lemma}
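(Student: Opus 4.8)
The plan is to count the admissible $Q$ for a fixed $P$ and then sum. For a fixed monic $P$ of degree $d$, the polynomials $Q$ of degree $<d$ coprime to $P$ are exactly the representatives of the invertible classes of the quotient ring $\FF_q[y]/(P)$ (the class of $Q=0$ is never invertible since $d\geq 1$); hence their number equals the order $\Phi(P)$ of the unit group $(\FF_q[y]/(P))^{\times}$. This reduces the lemma to evaluating
\[
c_d = \sum_{P} \Phi(P),
\]
where $P$ runs over the monic polynomials of degree $d$ with $P(0)\neq 0$.

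I would compute this sum through the generating series $\Theta(u) = \sum_{P\,\mathrm{monic}} \Phi(P)\, u^{\deg P}$, using that $\Phi$ is multiplicative with $\Phi(\pi^a) = q^{a\deg\pi} - q^{(a-1)\deg\pi}$ for every monic irreducible $\pi$. Summing the local contributions gives the Euler product $\Theta(u) = \prod_{\pi} \frac{1 - u^{\deg\pi}}{1 - (qu)^{\deg\pi}}$. Since unique factorization yields $\prod_{\pi}(1 - t^{\deg\pi})^{-1} = \sum_{d\geq 0} q^d t^d = (1-qt)^{-1}$, both the numerator and denominator products collapse, and one finds $\Theta(u) = \frac{1-qu}{1-q^2u}$.

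The constraint $P(0)\neq 0$ is precisely the condition $y\nmid P$, so the series I actually want is obtained from $\Theta(u)$ by deleting the single local factor attached to the degree-one prime $y$, namely $\frac{1-u}{1-qu}$. Dividing gives $\sum_{P(0)\neq 0} \Phi(P)\, u^{\deg P} = \frac{(1-qu)^2}{(1-q^2u)(1-u)}$, and a routine partial-fraction decomposition extracts the coefficient of $u^d$ as $(q-1)\,\frac{q^{2d}-1}{q+1} = (q-1)^2\,\frac{q^{2d}-1}{q^2-1}$, which is exactly $c_d$.

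The one delicate point, and the step I expect to be the main obstacle, is the bookkeeping for the condition $P(0)\neq 0$: rather than imposing it by an inclusion–exclusion directly on the summation over $P$, I localize it at the single prime $y$ and remove the corresponding Euler factor, which keeps the computation transparent. If one prefers to avoid the language of the polynomial zeta function, the same closed forms can be produced by stratifying all pairs $(P,Q)$ with $P$ monic of degree $d$, $P(0)\neq 0$, and $\deg Q<d$ according to their monic $\gcd$; this turns the elementary total count $q^{2d-1}(q-1)$ into a convolution identity that one inverts for $c_d$, the correct handling of the vanishing-at-zero constraint again being the crux.
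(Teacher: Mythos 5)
Your proposal is correct and follows essentially the same route as the paper: both reduce the count to summing $\card\,(\FF_q[y]/(P))^{\times}$ over admissible $P$, pass to an Euler product via the Chinese remainder theorem, evaluate it using the rationality of the zeta function of the affine line (the paper works with $\AA^1\setminus\{0\}$ directly, i.e.\ it divides out the local factor at $y$ exactly as you do), and extract the coefficient of $t^d$ from $\frac{(1-qt)^2}{(1-t)(1-q^2t)}$. The only difference is bookkeeping: you compute the full product first and then remove the factor at the prime $y$, while the paper excludes $y$ from the product from the outset.
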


\begin{proof}
This amounts to counting the number of couples $(P,z)$, where $P \in \FF_q[y]$ is a degree~$d$ monic polynomial
not divisible by~$y$ and $z$ is an invertible element of the quotient ring~$\FF_q[y]/(P)$.

Expanding $P$ into a product of irreducible polynomials and using the Chinese remainder lemma, we have
\begin{equation*}
1 + \sum_{d\geq 1} \, c_d t^d 
= \prod_{P \; \text{irreducible} \atop P \neq y} \, \left(1+ \sum_{k\geq 1} \, \card (\FF_q[y]/(P))^{\times} \, t^{k \deg(P)}\right),
\end{equation*}
where the product is taken over all irreducible polynomials of~$\FF_q[y]$ different from~$y$ and 
where $\deg(P)$ denotes the degree of~$P$. 
First observe that for any irreducible polynomial $P\in \FF_q[y]$
the group $(\FF_q[y]/(P))^{\times}$ of invertible elements of~$\FF_q[y]/(P)$ is of cardinality
$q^{k \deg(P)} - q^{(k-1) \deg(P)}$: indeed, there are $q^{k \deg(P)}$ polynomials of degree $<k \deg(P)$
and $q^{(k-1) \deg(P)}$ of them are divisible by~$P$, hence not invertible in~$\FF_q[y]/(P)$.
Consequently, 
\begin{eqnarray*}
1 + \sum_{d\geq 1} \, c_d t^d 
& = & \prod_{P \; \text{irreducible} \atop P \neq y} \, \left( 1 + \left(1 - q^{-\deg(P)} \right) \sum_{k\geq 1} \, (qt)^{k \deg(P)}\right) \\
& = & \prod_{P \; \text{irreducible} \atop P \neq y} \, \left( 1 + \left(1 - q^{-\deg(P)} \right) \frac{(qt)^{\deg(P)}}{1 - (qt)^{\deg(P)}} \right) \\
& = & \prod_{P \; \text{irreducible} \atop P \neq y} \, \frac{1 -t^{\deg(P)}}{1 - (qt)^{\deg(P)}} \, .
\end{eqnarray*}
On one hand the infinite product $\prod_{P \; \text{irreducible} \atop P \neq y} \, (1 -t^{\deg(P)})^{-1}$ is equal to the 
zeta function $Z_{\AA^1 \setminus\{0\}}(t)$ of the affine line minus a point.
On the other, 
\begin{equation*}
Z_{\AA^1 \setminus\{0\}}(t) = \frac{Z_{\AA^1}(t)}{Z_{\{0\}}(t)} = \frac{1-t}{1-qt} \, .
\end{equation*}
Therefore, 
\begin{equation*}
1 + \sum_{d\geq 1} \, c_d t^d = \frac{1-qt}{1-q^2t}\left/ \frac{1-t}{1-qt} \right.
= \frac{(1-qt)^2}{(1-t)(1-q^2t)} \, .
\end{equation*}
Subtracting~$1$ from both sides, we obtain
\begin{equation*}
\sum_{d\geq 1} \, c_d t^d = (q-1)^2 \frac{t}{(1-t)(1-q^2t)} \, ,
\end{equation*}
from which it is easy to derive the desired formula for~$c_d$.
\end{proof}

\begin{proof}[Proof of Proposition\,\ref{prop-counting}]
We have to count the number of those $(h+2)$-tuples $(P,Q,P_1, \ldots, P_h)$ such that 
$P$ is a degree~$d$ monic polynomial with $P(0) \neq 0$, 
$Q$ is a polynomial of degree~$<d$ and coprime to~$P$, 
each polynomial $P_i$ is of degree~$<d$, and
$\sum_{i=1}^h \, P_iQ_i \equiv Q$ modulo~$P$. 

By Lemma\,\ref{lem-counting3}, the number of couples $(P,Q)$ satisfying these conditions
is equal to $(q-1)^2 \, (q^{2d}-1)/(q^2-1)$. Since $\card\ \FF_q[y]/(P) = q^d$, by Lemma\,\ref{lem-counting}
we have $q^{d(h-1)}$~choices for the $h$-tuples $(P_1, \ldots, P_h)$.
The number we wish to count is the product of the two previous ones.
\end{proof}

\section{The Hilbert scheme of points in a Zariski open subset of the plane}\label{sec-cells}

Let $k$ be a field.
As is well known, the ideals of codimension~$n$ of an affine $k$-algebra~$A$ are in bijection 
with the $k$-points of the Hilbert scheme parametrizing finite subschemes of co\-length~$n$ of the spectrum of~$A$.
For instance the ideals of codimension~$n$ of the polynomial algebra~$k[x,y]$ 
are in bijection with the $k$-points of the Hilbert scheme~$\Hilb^n(\AA^2_k)$ of $n$~points on the affine plane.
Similarly, the ideals of codimension~$n$ of the Laurent polynomial algebra~$k[x,y,x^{-1},y^{-1}]$
are in bijection with the $k$-points of the Hilbert scheme~$\Hilb^n((\AA^1_k \setminus\{0\}) \times (\AA^1_k \setminus\{0\}))$
of $n$~points on the two-dimensional torus, which is a Zariski open subset of the plane.

In this paragraph we prove that the Hilbert scheme of $n$~points in a Zariski open subset of the plane
is an open subscheme of the Hilbert scheme of $n$~points in the plane, and show how to determine it explicitly.

\subsection{Parametrizing the finite-codimensional ideals of~$k[x,y]$}\label{ssec-cells}

Computing the homology of Hilbert scheme~$\Hilb^n(\AA^2_k)$, El\-lings\-rud and Str\o mme\,\cite{ES1} showed that it
has a cellular decomposition indexed by the partitions~$\lambda$ of~$n$,
each cell~$C_{\lambda}$ being an affine space of dimension $n+\ell(\lambda)$, where $\ell(\lambda)$
is the length of~$\lambda$. 

It follows that, in the special case when $k = \FF_q$ is a finite field of cardinality~$q$, 
the number~$A_n(q)$ of ideals of~$\FF_q[x,y]$ of codimension~$n$ is finite and given by the polynomial
\begin{equation}\label{Anq}
A_n(q) = \sum_{\lambda \,\vdash n}\, q^{n+\ell(\lambda)},
\end{equation}
where the sum runs over all partitions~$\lambda$ of~$n$ 
(we indicate this by the notation $\lambda \vdash n$ or by $|\lambda| = n$).
The polynomial $A_n(q)$ clearly has non-negative integer coefficients, its degree is~$2n$, 
and $A_n(1) = p(n)$ is equal to the number of partitions of~$n$ 
(for more on the polynomials~$A_n(q)$, see Remark\,\ref{rem-A}).

For our purposes we need an explicit description of the affine cells~$C_{\lambda}$.
We use a parametrization due to Conca and Valla\,\cite{CV}.
Let us now recall it. 

Given a positive integer~$n$, there is a well-known bijection between the partitions of~$n$
and the monomials ideals of codimension~$n$ of~$k[x,y]$.
The correspondence is as follows: to a partition~$\lambda$ of~$n$ we associate 
the sequence 
\begin{equation*}\label{def-m}
0 = m_0 < m_1 \leq \cdots \leq m_t 
\end{equation*}
of integers counting from right to left 
the boxes in each column of the Ferrers diagram of~$\lambda$; 
we have $m_1 + \cdots + m_t = n$.
Then the associated monomial ideal~$I_{\lambda}^0$ is given by
\begin{equation}\label{eq-monomial}
I_{\lambda}^0 = (x^t, x^{t-1}y^{m_1}, \ldots, xy^{m_{t-1}}, y^{m_t}).
\end{equation}
(Note that the generating set in the right-hand side of~\eqref{eq-monomial} is in general not minimal.)
The set $\BB_{\lambda} = \{x^i y^j \; |\, 0 \leq i < t, \; 0 \leq j <m_i \}$ induces a linear basis of the 
$n$-dimensional quotient algebra~$k[x,y]/I_{\lambda}^0$. 

Consider the lexicographic ordering on the monomials $x^i y^j$ given by
\[
1 < y < y^2 < \cdots < x < xy < xy^2 < \cdots < x^2 < x^2y < x^2y^2 < \cdots
\]
Then the cell~$C_{\lambda}$, called \emph{Gr\"obner cell} in\,\cite{CV}, is
by definition the set of ideals~$I$ of~$k[x,y]$ such that the dominating terms (for this ordering) of the elements of~$I$
generate the monomial ideal~$I_{\lambda}^0$.
It was proved in\,\cite{ES1} that $C_{\lambda}$ is an affine space.

Here is how Conca and Valla explicitly parametrize~$C_{\lambda}$.
Given a partition~$\lambda$ of~$n$ and the associated sequence $0 = m_0 < m_1 \leq \cdots \leq m_t$,
they first define the sequence of integers $d_1, \ldots, d_t$ by
\begin{equation}\label{def-d}
d_i = m_i - m_{i-1} \geq 0.
\end{equation}
We have $d_1 = m_1 >0$.

Later we shall also need the integer
\begin{equation}\label{def-v}
v(\lambda) = \card \left\{i = 1, \ldots, t \; |\; d_i \geq 1\right\};
\end{equation}
this integer is equal to the number of distinct values of the sequence $m_1 \leq \cdots \leq m_t$.
Note that $v(\lambda) \geq 1$; moreover, $v(\lambda) = 1$ if and only if the partition is ``rectangular'',
i.e. $m_1 = \cdots =  m_t \, (>0)$.

Let $T_{\lambda}$ be the set of $(t+1) \times t$-matrices $(p_{i,j})$ with entries in the one-variable polynomial  algebra~$k[y]$ 
satisfying the following conditions: $p_{i,j} = 0$ if $i <j$, 
the degree of~$p_{i,j}$ is less than~$d_j$ if $i \geq j$ and $d_j \geq 1$,
and $p_{i,j} = 0$ for all $i$ if $d_j = 0$.
The set $T_{\lambda}$ is an affine space whose dimension is $n+ \ell(\lambda)$.

Now consider the $(t+1) \times t$-matrix
{\scriptsize
\begin{equation}\label{matrixM}
M_{\lambda} = 
\begin{pmatrix}
y^{d_1} + p_1 & 0 & 0 &\cdots  & 0 & 0 &0 & \cdots & 0\\
p_{2,1} - x & y^{d_2} + p_2 & 0 & \cdots  & 0 & 0 & 0 & \cdots & 0\\
p_{3,1}  & p_{3,2} - x & y^{d_3} + p_3  & \cdots  & 0 & 0 & 0 & \cdots & 0\\
\vdots  & \vdots & \vdots  & \ddots  & \vdots & \vdots & \vdots &  &\vdots \\
p_{i-1,1}  & p_{i-1,2} & p_{i-1,3} & \cdots  &  y^{d_{i-1}} + p_{i-1}  & 0 & 0 & \cdots & 0 \\
p_{i,1}  & p_{i,2} & p_{i,3} & \cdots  &  p_{i,i-1} - x & y^{d_i} + p_i & 0 & \cdots & 0 \\
p_{i+1,1}  & p_{i+1,2} & p_{i+1,3} & \cdots  &  p_{i+1,i-1}  & p_{i+1,i} - x & y^{d_{i+1}} + p_{i+1} & \cdots & 0 \\
\vdots  & \vdots & \vdots  & \ddots  & \vdots & \vdots & \vdots & \ddots & \vdots \\
p_{t,1}  & p_{t,2} & p_{t,3} & \cdots  &  p_{t,i-1}  & p_{t,i} & p_{t,i+1}  & \cdots & y^{d_t} + p_{t}\\
p_{t+1,1}  & p_{t+1,2} & p_{t+1,3} & \cdots  &  p_{t+1,i-1}  & p_{t+1,i} & p_{t+1,i+1}  & \cdots & p_{t+1,t} - x
\end{pmatrix} 
,
\end{equation}
}%
where for simplicity we set $p_i = p_{i,i}$.

By\,\cite[Th.\,3.3]{CV} the map sending the polynomial matrix~$(p_{i,j}) \in T_{\lambda}$ 
to the ideal~$I_{\lambda}$ of~$k[x,y]$ generated by all $t$-minors (the maximal minors)
of the matrix~$M_{\lambda}$ is a bijection of $T_{\lambda}$ onto~$C_{\lambda}$.
These minors are polynomial expressions with integer coefficients in the coefficients of the $p_{i,j}$'s.

\subsection{Localizing}\label{ssec-loc}

Let $S$ be a multiplicative submonoid of~$k[x,y]$ not containing~$0$.
We assume that $S$ has a finite generating set~$\Sigma$. 
In the sequel we shall concentrate on two cases:  
$\Sigma = \{y\}$ (in Section\,\ref{sec-semi-inv}) and $\Sigma = \{x,y\}$ (in Section\,\ref{sec-inv}).

It follows from Section\,\ref{sec-prelim} that the set of $n$-codimensional ideals of
the localization~$S^{-1}k[x,y]$ can be identified
with the subset of~$\Hilb^n(\AA^2_k)$ consisting of the $n$-codimensional ideals~$I$ of~$k[x,y]$ 
such that for all $s\in S$, the endomorphism~$\mu_s$ of~$k[x,y]/I$ induced by the multiplication by~$s$ is a linear isomorphism.
The latter is equivalent to $\det\mu_s \neq 0$ for all $s\in \Sigma$.

By the considerations of Section\,\ref{ssec-cells}, the set of $n$-codimensional ideals of the algebra~$S^{-1}k[x,y]$
is the disjoint union 
\begin{equation*}
\coprod_{\lambda \,\vdash n} \, C_{\lambda}^{\Sigma} \, ,
\end{equation*}
where $C_{\lambda}^{\Sigma}$ is the Zariski open subset of the affine Gr\"obner cell~$C_{\lambda}$
consisting of the points satisfying $\det\mu_s \neq 0$ for all $s\in \Sigma$.

Consequently, the Hilbert scheme $\Hilb^n(\Spec(S^{-1}k[x,y]))$ parametrizing subschemes of colength~$n$
in~$\Spec(S^{-1}k[x,y])$ is an open subscheme of~$\Hilb^n(\AA^2_k)$, hence an open subscheme of~$\Hilb^n(\PP^2_k)$.
Since by\,\cite{Fo, Gr} the latter is smooth and projective, 
$\Hilb^n(\Spec(S^{-1}k[x,y]))$ is a smooth quasi-projective variety.

The endomorphism $\mu_x$ (resp.\ $\mu_y$) of~$k[x,y]/I$ induced by the multiplication by~$x$ (resp.\ by~$y$)
can be expressed as a matrix in the basis~$\BB_{\lambda}$. 
Observe that the entries of such a matrix are polynomial expressions with integer coefficients in the coefficients of the $p_{i,j}$'s.
Therefore, if any $s\in \Sigma$ is a linear combination with integer coefficients of monomials in the variables $x,y$, then 
the Hilbert scheme $\Hilb^n(\Spec(S^{-1}k[x,y]))$ is defined over~$\ZZ$ as a variety.

In particular, the Hilbert schemes $\Hilb^n(\AA^1_k \times (\AA^1_k \setminus \{0\}))$
and $\Hilb^n((\AA^1_k \setminus \{0\})^2)$ are smooth quasi-projective varieties
defined over~$\ZZ$.

\begin{exa}
Let $\lambda$ be the unique self-conjugate partition of~$3$.
In this case, $t=2$, $m_1 = 1$, $m_2 = 2$,
hence $d_1 = d_2 = 1$.
The corresponding matrix~$M_{\lambda}$, as in\,\eqref{matrixM}, is 
\begin{equation*}
M_{\lambda} =
\begin{pmatrix}
y + a & 0\\
b - x & y + d\\
c & e-x
\end{pmatrix}
,
\end{equation*}
where $a,b,c,d,e$ are scalars.
The associated Gr\"obner cell~$C_{\lambda}$ is a $5$-dimensional affine space parametrized by these five scalars.
The ideal~$I_{\lambda}$ is generated by the maximal minors of the matrix, namely by
$(b-x)(e-x)-c(y+d)$, $(e-x)(y+a)$, and $(y+a)(y+d)$.
It follows that modulo~$I_{\lambda}$ we have the relations
\begin{equation*}
x^2 \equiv (b+e)x + cy + (cd - be), \quad
xy \equiv -ax + ey + ae, \quad
y^2 \equiv -(a+d)y -ad.
\end{equation*}

In the basis $\BB_{\lambda} = \{x,y,1\}$ the multiplication endomorphisms $\mu_x$ and $\mu_y$ can be expressed as the matrices
\begin{equation*}
\mu_x =
\begin{pmatrix}
b+e & -a & 1 \\
c & e & 0 \\
cd-be & ae & 0
\end{pmatrix}
\quad
\text{and}
\quad
\mu_y = 
\begin{pmatrix}
-a & 0 & 0 \\
e & -(a+d) & 1 \\
ae & -ad & 0
\end{pmatrix}
.
\end{equation*}
We have $\det\mu_x = e(ac - cd + be)$ and $\det\mu_y = -ad^2$.

It follows from the above computations that, if for instance $\Sigma = \{x,y\}$,
then $C_{\lambda}^{\Sigma}$ is the complement in the affine space~$\AA_k^5$ of the union
of the three hyperplanes $a = 0$, $d= 0$, $e = 0$ and of the quadric hypersurface $ac - cd + be = 0$.
\end{exa}

\section{The punctual Hilbert scheme of the complement of a line in an affine plane}\label{sec-semi-inv}

In this section we apply the considerations of the previous section to the case $\Sigma = \{y\}$.
Here $S$ is the multiplicative submonoid of~$k[x,y]$ generated by~$y$
and $S^{-1}k[x,y] = k[x,y,y^{-1}] = k[x][y,y^{-1}]$.

By Section\,\ref{ssec-loc}, the Hilbert scheme $\Hilb^n(\AA^1_k \times (\AA^1_k \setminus \{0\}))$,
that is the set of $n$-codimensional ideals of $k[x,y,y^{-1}]$,
is the disjoint union over the partitions~$\lambda$ of~$n$ of the sets $C_{\lambda}^{y}$, 
where $C_{\lambda}^{y}$ consists of the ideals $I \in C_{\lambda}$ such that $y$ is invertible in~$k[x,y]/I$. 
We call $C_{\lambda}^{y}$ the \emph{semi-invertible Gr\"obner cell} associated to the partition~$\lambda$.

\subsection{A criterion for the invertibility of~$y$}\label{ssec-criterion-invy}

Let $p_y: k[x,y] \to k[x]$ be the algebra map sending $x$ to itself and $y$ to~$0$.
Then by Lemma\,\ref{lem-quot-by-s},
the set $C_{\lambda}^{y}$ consists of the ideals $I \in C_{\lambda}$ such that $p_y(I) = k[x]$.

Recall from Section\,\ref{ssec-cells} that $I_{\lambda}$ is generated 
by the maximal minors of the matrix~$M_{\lambda}$ of\,\eqref{matrixM},
namely by the polynomials $f_0(x,y), \ldots$, $f_t(x,y)$, 
where we define $f_i(x,y)$ to be the determinant of the $t\times t$-matrix obtained from~$M_{\lambda}$
by deleting its $(i+1)$-st row.
Then the ideal $p_y(I_{\lambda})$ can be identified with the ideal of~$k[x]$
generated by the polynomials $f_0(x,0), \ldots, f_t(x,0) \in k[x]$ obtained by setting $y=0$.
We need to determine under what conditions this ideal is equal to the whole algebra~$k[x]$.

Recall the entries of the matrix~$M_{\lambda}$ and particularly the polynomials $p_{i,j}$ and
$p_i = p_{i,i} \in k[y]$.
Let $a_{i,j} = p_{i,j}(0)$ be the constant term of~$p_{i,j}$. As above, we set $a_i = a_{i,i} = p_i(0)$.
Note that $a_j = 1$ and $a_{i,j} = 0$ for all $i \neq j$ whenever $d_j = 0$.

Then $f_0(x,0), \ldots, f_t(x,0)$ are the maximal minors of the matrix
{\footnotesize
\begin{equation*}
M_{\lambda}^y = 
\begin{pmatrix}
a_1 & 0 & 0 &\cdots  & 0 & 0 &0 & \cdots & 0\\
a_{2,1} - x & a_2 & 0 & \cdots  & 0 & 0 & 0 & \cdots & 0\\
a_{3,1}  & a_{3,2} - x & a_3  & \cdots  & 0 & 0 & 0 & \cdots & 0\\
\vdots  & \vdots & \vdots  & \ddots  & \vdots & \vdots & \vdots &  &\vdots \\
a_{i-1,1}  & a_{i-1,2} & a_{i-1,3} & \cdots  &  a_{i-1}  & 0 & 0 & \cdots & 0 \\
a_{i,1}  & a_{i,2} & a_{i,3} & \cdots  &  a_{i,i-1} - x & a_i & 0 & \cdots & 0 \\
a_{i+1,1}  & a_{i+1,2} & a_{i+1,3} & \cdots  &  a_{i+1,i-1}  & a_{i+1,i} - x & a_{i+1} & \cdots & 0 \\
\vdots  & \vdots & \vdots  & \ddots  & \vdots & \vdots & \vdots & \ddots & \vdots \\
a_{t,1}  & a_{t,2} & a_{t,3} & \cdots  &  a_{t,i-1}  & a_{t,i} & a_{t,i+1}  & \cdots & a_{t}\\
a_{t+1,1}  & a_{t+1,2} & a_{t+1,3} & \cdots  &  a_{t+1,i-1}  & a_{t+1,i} & a_{t+1,i+1}  & \cdots & a_{t+1,t} - x
\end{pmatrix} .
\end{equation*}
}%
To be precise, $f_i(x,0)$ is the determinant of the square matrix obtained from~$M_{\lambda}^y$
by deleting its $(i+1)$-st row.

The criterion we need is the following.

\begin{prop}\label{prop-criterion-y}
We have $p_y(I_{\lambda}) = k[x]$ if and only if $a_i \neq 0$ for all $i = 1, \ldots, t$ such that $d_i \geq 1$. 
\end{prop}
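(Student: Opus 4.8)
The plan is to translate the statement into a statement about common roots of polynomials and then into a rank condition on the scalar matrix $M_\lambda^y(\alpha)$ obtained by specializing $x = \alpha$. Since $k[x]$ is a principal ideal domain, the ideal $p_y(I_\lambda) = (f_0(x,0), \ldots, f_t(x,0))$ equals $k[x]$ if and only if its generators have no common root in the algebraic closure $\overline{k}$. Because $f_0(x,0), \ldots, f_t(x,0)$ are precisely the maximal minors of the $(t+1) \times t$ matrix $M_\lambda^y$, they vanish simultaneously at $\alpha$ exactly when $M_\lambda^y(\alpha)$ fails to have full column rank $t$. Thus I would reduce the proposition to the assertion that $M_\lambda^y(\alpha)$ has rank $t$ for every $\alpha \in \overline{k}$ if and only if $a_i \neq 0$ for every $i$ with $d_i \geq 1$. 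Using the convention $a_i = 1$ when $d_i = 0$, the latter condition is equivalent to $a_1 a_2 \cdots a_t \neq 0$.

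For the easy direction I would delete the last row of $M_\lambda^y$: the remaining top $t \times t$ block is lower triangular with diagonal entries $a_1, \ldots, a_t$, since the $-x$ terms sit strictly below the diagonal. Hence its determinant is $f_t(x,0) = a_1 a_2 \cdots a_t$, a constant. If all the relevant $a_i$ are nonzero, this is a nonzero scalar lying in $p_y(I_\lambda)$, forcing $p_y(I_\lambda) = k[x]$.

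For the converse I would assume that some $a_{i_0} = 0$ and, crucially, choose $i_0$ to be the \emph{largest} such index, so that $a_r \neq 0$ for every $r > i_0$. I would then exhibit a value $\alpha$ for which $M_\lambda^y(\alpha)$ has a nonzero kernel vector $v = (v_1, \ldots, v_t)$: set $v_j = 0$ for $j < i_0$ and $v_{i_0} = 1$, which makes rows $1, \ldots, i_0$ hold automatically (row $i_0$ because its diagonal entry $a_{i_0}$ vanishes). Since $a_{i_0+1}, \ldots, a_t$ are invertible, forward substitution through rows $i_0+1, \ldots, t$ solves each $v_r$ uniquely as a polynomial in $\alpha$, and a degree count shows $v_r$ has degree $r - i_0$ with leading term $\alpha^{r-i_0}/(a_{i_0+1} \cdots a_r)$. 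Substituting these into the last row produces a single polynomial equation $g(\alpha) = 0$ whose dominant term comes from $(a_{t+1,t} - \alpha)\, v_t$ and has degree $t - i_0 + 1 \geq 1$; hence $g$ is non-constant and has a root $\alpha \in \overline{k}$. At such an $\alpha$ the vector $v$ is a nonzero element of the kernel, so $M_\lambda^y(\alpha)$ drops rank and $\alpha$ is a common root of all the minors, giving $p_y(I_\lambda) \neq k[x]$. The main obstacle is exactly this last step: making the forward substitution well defined and the resulting equation genuinely non-constant. Choosing $i_0$ maximal is what guarantees that all later diagonal entries are invertible, and the degree bookkeeping is what upgrades a mere singularity of the top block into an actual common root of the whole system.
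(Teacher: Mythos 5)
Your proof is correct, and for the harder (necessity) direction it takes a genuinely different route from the paper's. Both proofs handle sufficiency identically: $f_t(x,0)=a_1a_2\cdots a_t$ is the determinant of the lower triangular top block, so it is a nonzero constant in $p_y(I_\lambda)$ when all relevant $a_i$ are nonzero. For necessity, the paper never leaves the field $k$ and never invokes roots: assuming $a_i=0$ for some $i$, it exploits the block-triangular shape of the minors of $M_\lambda^y$ to show that deleting any row with index $>i$ yields determinant $0$ (the top-left block is lower triangular with the vanishing diagonal entry $a_i$), while deleting any row with index $\le i$ yields a determinant divisible by the determinant of the bottom-right block $M_2$, a degree-$(t-i+1)$ characteristic polynomial; hence $p_y(I_\lambda)$ sits inside the proper principal ideal $(\det M_2)$. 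You instead pass to $\overline{k}$, reformulate properness of the ideal as the existence of a common root of the maximal minors (valid since $k[x]$ is a PID and gcd is stable under field extension), and equate that with a drop in column rank of the specialized matrix $M_\lambda^y(\alpha)$; you then build an explicit kernel vector by forward substitution, which requires your key extra idea of taking $i_0$ \emph{maximal} with $a_{i_0}=0$ so that all later diagonal entries are invertible, and a degree count to ensure the last-row equation $g(\alpha)=0$ is non-constant. The trade-off: the paper's argument is shorter, works uniformly for any vanishing $a_i$, and exhibits the proper ideal explicitly over $k$; yours requires the maximal-index choice, the algebraic closure, and careful leading-term bookkeeping, but in exchange it gives a concrete geometric picture --- the common zeros of the generators of $p_y(I_\lambda)$ are exactly the points $\alpha$ where $M_\lambda^y(\alpha)$ drops rank, and you produce such a point explicitly.
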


\begin{proof}
Since $a_i = 1$ when $d_i = 0$, it is equivalent to prove that $p_y(I_{\lambda}) = k[x]$ if and only if 
$a_{1} a_{2} \cdots a_{t} \neq 0$.

Set $I_x = p_y(I_{\lambda}) \subset k[x]$.
The condition $a_{1} a_{2} \cdots a_{t} \neq 0$ is sufficient.
Indeed, the last polynomial, $f_t(x,0)$, 
is the determinant of a lower triangular matrix whose diagonal entries are the
scalars~$a_i$; hence, $f_t(x,0) = a_{1} a_{2} \cdots a_{t}$. 
Thus, if $f_t(x,0)$ is non-zero, then $I_x = k[x]$.

To check the necessity of the condition, we will prove that for each $i= 1, \ldots, t$,
the vanishing of the scalar~$a_i$ implies that the ideal~$I_x$ is contained in a proper ideal generated
by a minor of~$M_{\lambda}^y$.

If $a_1 = 0$, then $f_1(x,0) =  \cdots = f_t(x,0) = 0$ since these are determinants of matrices whose first row is zero.
It follows that $I_x$ is the principal ideal generated by the characteristic polynomial~$f_0(x,0)$, which is of degree~$t\geq 1$.
Hence, $I_x$ is a proper ideal of~$k[x]$.

Let now $i \geq 2$. If for $k \geq i$, we delete the $(k+1)$-st row of~$M_{\lambda}^y$,
we obtain a lower block-triangular matrix of the form
\[
\begin{pmatrix}
M_1 & 0 \\
\ast & M_2^{(k)}
\end{pmatrix} 
,
\]
where $M_1$ is the square submatrix of~$M_{\lambda}^y$ corresponding to the rows $1, \ldots, i$
and to the columns $1, \ldots, i$;
this is a lower triangular matrix whose diagonal entries are $a_1, \ldots, a_i$.
Consequently, if $a_i = 0$, then $f_k(x,0) = 0$ for all $k \geq i$.

Under the same condition $a_i = 0$, if we delete the $(k+1)$-st row of~$M_{\lambda}^y$ for $k < i$,
then we obtain a lower block-triangular matrix of the form
\[
\begin{pmatrix}
M_1^{(k)} & 0 \\
\ast & M_2
\end{pmatrix} 
,
\]
where $M_2$ is the square submatrix of~$M_{\lambda}^y$ corresponding to the rows $i+1, \ldots t+1$ and to the columns $i, \ldots t$:
\[
M_2 = 
\begin{pmatrix}
a_{i+1,i} - x & a_{i+1} & \cdots & 0 & 0\\
a_{i+2,i}  & a_{i+2,i+1} -x & \cdots & 0 & 0\\
\vdots & \vdots & \ddots & \vdots & \vdots\\
a_{t,i}  & \cdots & \cdots & a_{t,t-1} - x & a_t\\
a_{t+1,i} & a_{t+1,i+1}  & \cdots & a_{t+1,t-1} & a_{t+1,t} - x
\end{pmatrix}
.
\]
Consequently, the polynomials $f_k(x,0)$ for $k<i$ are all divisible by the determinant of~$M_2$.
Thus, $I_x$ is contained in the ideal generated by~$\det (M_2)$,
which is a characteristic polynomial of degree~$t-i+1$.
Since $t-i+1 \geq 1$ for all $i= 1, \ldots, t$, we have $I_x \neq k[x]$.
\end{proof}

As an immediate consequence of Section\,\ref{ssec-loc} and of Proposition\,\ref{prop-criterion-y}
we obtain the following.

\begin{coro}\label{coro-y-inverse}
The set of $n$-codimensional ideals of~$k[x,y,y^{-1}]$ is the disjoint union 
\begin{equation*}
\coprod_{\lambda \,\vdash n} \, C_{\lambda}^y \, ,
\end{equation*}
where $C_{\lambda}^y$ is the complement in the affine Gr\"obner cell~$C_{\lambda}$
of the union of the hyperplanes $a_i = 0$ where $i$ runs over all integers $i= 1, \ldots, t$ such that $d_i \geq 1$.
\end{coro}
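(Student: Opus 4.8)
The plan is to combine the cellular decomposition established in Section\,\ref{ssec-loc} with the explicit criterion of Proposition\,\ref{prop-criterion-y}, and then to reinterpret that criterion as the removal of coordinate hyperplanes from each affine Gr\"obner cell.

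First I would specialize Section\,\ref{ssec-loc} to $\Sigma = \{y\}$. This already tells us that the set of $n$-codimensional ideals of $k[x,y,y^{-1}] = S^{-1}k[x,y]$ is canonically identified with the disjoint union $\coprod_{\lambda \vdash n} C_{\lambda}^{y}$, where $C_{\lambda}^{y}$ is the Zariski open subset of the Gr\"obner cell~$C_{\lambda}$ cut out by the single condition $\det\mu_y \neq 0$, i.e.\ by the invertibility of~$y$ modulo the corresponding ideal~$I$. Both the disjointness and the exhaustiveness are inherited directly from the cellular decomposition $\Hilb^n(\AA^2_k) = \coprod_{\lambda \vdash n} C_{\lambda}$ recalled in Section\,\ref{ssec-cells}: intersecting each cell with an open condition neither merges distinct cells nor loses any of the relevant ideals. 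So the only remaining task is to describe $C_{\lambda}^{y}$ explicitly inside~$C_{\lambda}$.

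Next I would translate the defining open condition into equations in the parameters. By Lemma\,\ref{lem-quot-by-s} applied with $s = y$, the element~$y$ is invertible modulo~$I$ precisely when $p_y(I) = k[x]$. Proposition\,\ref{prop-criterion-y} identifies this in turn with the non-vanishing $a_i \neq 0$ for every index $i \in \{1, \ldots, t\}$ such that $d_i \geq 1$, where $a_i = p_{i,i}(0)$ is the constant term of the diagonal entry~$p_i = p_{i,i}$ of the Conca--Valla matrix~$M_{\lambda}$. Thus, on the cell~$C_{\lambda}$, the condition ``$y$ invertible mod~$I$'' is exactly $\prod_{i:\,d_i\geq 1} a_i \neq 0$.

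The only point requiring a word of justification --- and the closest thing to an obstacle here, though a mild one --- is that each individual condition $a_i = 0$ genuinely defines a \emph{hyperplane} in the affine space~$C_{\lambda}$, rather than being vacuous or redundant. This follows from the Conca--Valla parametrization $T_{\lambda} \cong C_{\lambda}$ of Section\,\ref{ssec-cells}: when $d_i \geq 1$, the diagonal polynomial~$p_{i,i}$ has degree~$< d_i$, so its constant term~$a_i$ is one of the free affine coordinates on~$T_{\lambda}$, and consequently $\{a_i = 0\}$ is a coordinate hyperplane. Therefore $C_{\lambda}^{y}$ is precisely the complement in~$C_{\lambda}$ of the union of the hyperplanes $a_i = 0$ taken over those~$i$ with $d_i \geq 1$, which is exactly the assertion of the corollary.
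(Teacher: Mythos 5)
Your proposal is correct and follows exactly the paper's route: the paper states this corollary as an immediate consequence of the localization decomposition of Section\,\ref{ssec-loc} (specialized to $\Sigma=\{y\}$, via Lemma\,\ref{lem-quot-by-s}) together with Proposition\,\ref{prop-criterion-y}, which is precisely the chain you spell out. Your extra remark that each $a_i$ is a free coordinate of $T_\lambda\cong C_\lambda$ (so that $a_i=0$ is genuinely a coordinate hyperplane) is a correct, if implicit-in-the-paper, finishing touch.
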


\subsection{On the number of finite-codimensional ideals of~$\FF_q[x,y,y^{-1}]$}\label{ssec-semi-invert}

Recall the positive integer~$v(\lambda)$ defined by\,\eqref{def-v}.

\begin{prop}\label{prop-semi-inv}
Let $k= \FF_q$. For each partition~$\lambda$ of~$n$, the set $C_{\lambda}^y$ is finite
and its cardinality is given by
\begin{equation*}
\card\ C_{\lambda}^y = (q-1)^{v(\lambda)} \, q^{n + \ell(\lambda) - v(\lambda)}.
\end{equation*}
\end{prop}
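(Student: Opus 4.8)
The plan is to compute the cardinality of $C_\lambda^y$ directly from the explicit description in Corollary~\ref{coro-y-inverse}. By that corollary, $C_\lambda^y$ is the complement, inside the affine space $C_\lambda \cong T_\lambda$ of dimension $n+\ell(\lambda)$, of the union of the coordinate hyperplanes $a_i = 0$ taken over those indices $i \in \{1,\dots,t\}$ with $d_i \geq 1$. The crucial structural observation is that the relevant scalars $a_i = p_{i,i}(0)$ are precisely the constant terms of the diagonal polynomials $p_i = p_{i,i}$, and these constant terms are among the free coordinates parametrizing $T_\lambda$: indeed, for each $j$ with $d_j \geq 1$ the polynomial $p_{j,j}$ has degree $< d_j$, so its constant term $a_j$ is a genuine free coordinate (present exactly when $d_j \geq 1$, i.e. $d_j \geq 1$ forces $p_{j,j}$ to be a nonzero-length polynomial with a constant term). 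There are exactly $v(\lambda)$ such indices, by the definition~\eqref{def-v} of $v(\lambda)$.

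First I would set up coordinates on $T_\lambda$ so that the count becomes transparent. Writing $N = n + \ell(\lambda)$ for the dimension of the affine space $C_\lambda$, I would single out the $v(\lambda)$ coordinates $a_i$ (for $i$ with $d_i \geq 1$) from the remaining $N - v(\lambda)$ coordinates. The defining condition for membership in $C_\lambda^y$ is that each of these $v(\lambda)$ distinguished coordinates be nonzero, while the remaining $N - v(\lambda)$ coordinates are completely free. Since the $a_i$ and the other coordinates are independent affine coordinates on $\FF_q^{N}$, the count factorizes: each distinguished coordinate $a_i$ ranges over $\FF_q \setminus \{0\}$, contributing a factor $q-1$, and each of the other coordinates ranges over all of $\FF_q$, contributing a factor $q$.

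This gives immediately
\begin{equation*}
\card\ C_\lambda^y = (q-1)^{v(\lambda)} \, q^{\,N - v(\lambda)} = (q-1)^{v(\lambda)} \, q^{\,n + \ell(\lambda) - v(\lambda)},
\end{equation*}
which is the asserted formula, and finiteness is clear since $C_\lambda^y \subseteq \FF_q^N$. The main thing to verify carefully—and the step I expect to require the most attention—is the claim that the scalars $a_i$ genuinely form part of an independent system of affine coordinates on $T_\lambda$, with exactly one such coordinate for each index $i$ satisfying $d_i \geq 1$. This rests on reconciling the shape of the matrix $M_\lambda$ in~\eqref{matrixM} with the degree constraints defining $T_\lambda$ in Section~\ref{ssec-cells}: one must check that the constant term of each diagonal entry $y^{d_i} + p_i$ is exactly $a_i = p_i(0)$ and that, when $d_i \geq 1$, this constant term is among the freely varying entries (the convention $a_i = 1$ when $d_i = 0$, recorded before Proposition~\ref{prop-criterion-y}, explains why those indices impose no condition). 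Once the parametrization is matched up, the hyperplanes $\{a_i = 0\}$ are distinct coordinate hyperplanes and the product count is just the elementary fact that deleting $v(\lambda)$ coordinate hyperplanes from $\FF_q^N$ leaves $(q-1)^{v(\lambda)} q^{N - v(\lambda)}$ points.
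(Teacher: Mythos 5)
Your proof is correct and takes essentially the same approach as the paper: the paper's own proof simply invokes Corollary~\ref{coro-y-inverse} and observes that $C_{\lambda}^y$ is parametrized by $n+\ell(\lambda)$ parameters subject to the sole condition that $v(\lambda)$ of them are nonzero. Your careful verification that the scalars $a_i$ (for $d_i\geq 1$) are genuinely independent coordinates of the affine space $T_{\lambda}$ is exactly the implicit content of that one-line argument.
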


\begin{proof}
By Corollary\,\ref{coro-y-inverse} the set $C_{\lambda}^y$ 
is parametrized by $n + \ell(\lambda)$ parameters subject to the sole condition that $v(\lambda)$ of them are not  zero.
\end{proof}

\begin{coro}\label{coro-ideaux-y-inverse}
For each integer $n\geq 1$, 
the number~$B_n(q)$ of $n$-codimensional ideals of~$\FF_q[x,y,y^{-1}]$ is equal to $(q-1) \, q^n  \, B^{\circ}_n(q)$, where
\begin{equation*}
B^{\circ}_n(q) = \sum_{\lambda \,\vdash n}\, (q-1)^{v(\lambda)-1} \, q^{\ell(\lambda) - v(\lambda)}.
\end{equation*}
\end{coro}

Note that $B^{\circ}_n(q)$ is a polynomial in~$q$ since $v(\lambda) \geq 1$ and $\ell(\lambda) \geq v(\lambda)$ for all partitions.
It is of degree~$n-1$ and has integer coefficients.
The coefficients of~$B^{\circ}_n(q)$ may be negative, as one can see in Table\,\ref{tableB} at the end of the paper.

\begin{rem}
Let $v_n$ be the valuation of the polynomial~$B^{\circ}_n(q)$, i.e.\
the maximal integer $r$ such that $q^r$ divides~$B^{\circ}_n(q)$.
We conjecture that $v_n = 0$, $1$, or~$2$,
and that the infinite word $v_1v_2v_3 \ldots$ is equal to $0 \prod_{n=1}^{\infty} \, 0 1^{2n} 0 2^n $.
\end{rem}

Let us now give a product formula for the generating function of the sequence of polynomials~$B_n(q)$
and an arithmetical interpretation for two values of~$B_n^{\circ}(q)$.

\begin{theorem}\label{th-B}
(a) Let $B_n(q)$ be the number of ideals of~$\FF_q[x,y,y^{-1}]$ of codimension~$n$.
We have
\begin{equation*}
1 + \sum_{n\geq 1}\, \frac{B_n(q)}{q^n} \, t^n = \prod_{i\geq 1}\, \frac{1-t^i}{1 - qt^i} \, .
\end{equation*}

(b) Let $B_n^{\circ}(q)$ be the polynomial $B_n^{\circ}(q) = (q-1)^{-1}q^{-n} B_n(q)$. 
It has integer coefficients and satisfies
\begin{equation*}
B_n^{\circ}(1) = \sigma_0(n),
\end{equation*}
where $\sigma_0(n)$ is the number of divisors of~$n$, and 
\begin{equation*}
B_n^{\circ}(-1) = 
\left\{
\begin{array}{cl}
(-1)^{k-1} & \text{if}\; n = k^2 \, \text{ for some integer}\; k, \\
0 \quad &  \text{otherwise.}
\end{array}
\right.
\end{equation*}
\end{theorem}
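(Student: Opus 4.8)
The plan is to derive everything from the closed formula $B_n^{\circ}(q) = \sum_{\lambda \vdash n} (q-1)^{v(\lambda)-1}\, q^{\ell(\lambda)-v(\lambda)}$ of Corollary~\ref{coro-ideaux-y-inverse}, together with the relation $B_n(q) = (q-1)\,q^n\,B_n^{\circ}(q)$; part~(a) is a reorganization of this sum into a product, and part~(b) consists of two specializations of~(a).

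For part~(a), I would multiply the closed formula by $(q-1)$, sum over $n$, and include the empty partition to account for the leading~$1$, obtaining
\begin{equation*}
1 + \sum_{n\geq 1} \frac{B_n(q)}{q^n}\, t^n = \sum_{\lambda} (q-1)^{v(\lambda)}\, q^{\ell(\lambda)-v(\lambda)}\, t^{|\lambda|}.
\end{equation*}
The key observation is that $v(\lambda)$, defined as the number of distinct column heights $m_1 \leq \cdots \leq m_t$, equals the number $D(\lambda)$ of distinct parts of~$\lambda$: both count the corners of the Young diagram (cells whose right and lower neighbours are absent), since a corner is at once the end of a row of maximal length and the foot of a column of maximal height. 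Recalling that $\ell(\lambda)$ is the number of parts, the right-hand side becomes $\sum_\lambda (q-1)^{D(\lambda)}\, q^{\ell(\lambda)-D(\lambda)}\, t^{|\lambda|}$. On the other hand, expanding each factor as $\frac{1-t^i}{1-qt^i} = 1 + (q-1)\sum_{k\geq 1} q^{k-1}\, t^{ik}$, the product $\prod_{i\geq 1}\frac{1-t^i}{1-qt^i}$ is a sum over choices of finitely many part sizes~$i$, each with a multiplicity $k_i \geq 1$; such a choice is exactly a partition~$\lambda$ contributing $(q-1)^{D(\lambda)}\, q^{\ell(\lambda)-D(\lambda)}\, t^{|\lambda|}$, because $D(\lambda)$ counts the chosen part sizes and $\ell(\lambda)=\sum_i k_i$. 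The two sums agree term by term, which proves~(a).

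For part~(b), the integrality of~$B_n^{\circ}(q)$ is already recorded after Corollary~\ref{coro-ideaux-y-inverse}. To evaluate $B_n^{\circ}(1)$, I would rewrite~(a) as $\sum_{n\geq 1} B_n^{\circ}(q)\, t^n = (q-1)^{-1}\bigl(\prod_{i\geq 1}\tfrac{1-t^i}{1-qt^i} - 1\bigr)$ and let $q\to 1$ coefficient-wise; since the product equals~$1$ at $q=1$, the limit is its $q$-derivative there, namely $\sum_{i\geq 1}\frac{t^i}{1-t^i} = \sum_{n\geq 1}\sigma_0(n)\,t^n$, whence $B_n^{\circ}(1)=\sigma_0(n)$. (Equivalently, setting $q=1$ directly in the closed formula kills every term with $v(\lambda)>1$ and leaves the number of rectangular partitions of~$n$, i.e.\ the number of its divisors.)

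Finally, to compute $B_n^{\circ}(-1)$ I would specialize~(a) at $q=-1$. Since $B_n(-1) = -2\,(-1)^n\,B_n^{\circ}(-1)$, part~(a) gives
\begin{equation*}
1 - 2\sum_{n\geq 1} B_n^{\circ}(-1)\, t^n = \prod_{i\geq 1} \frac{1-t^i}{1+t^i}.
\end{equation*}
The right-hand side is then evaluated by the classical theta identity $\prod_{i\geq 1}\frac{1-t^i}{1+t^i} = \sum_{k\in\ZZ}(-1)^k t^{k^2} = 1 + 2\sum_{k\geq 1}(-1)^k t^{k^2}$, a specialization of the Jacobi triple product. Matching coefficients of~$t^n$ yields $B_n^{\circ}(-1) = (-1)^{k-1}$ when $n=k^2$ and $0$ otherwise, as claimed. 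The only ingredient external to the paper is this theta identity; everything else is routine, so the substantive points are the identification $v(\lambda)=D(\lambda)$ in~(a) and the invocation of the Gauss--Jacobi identity in~(b).
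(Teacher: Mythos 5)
Your proposal is correct and follows essentially the same route as the paper: your term-by-term expansion of the factors $\frac{1-t^i}{1-qt^i}$ matched against partitions is exactly the paper's multiplicativity identity over the free abelian monoid of partitions (and your identification $v(\lambda)=D(\lambda)$ is the paper's observation that the multisets $\{d_i\}$ and $\{e_i\}$ coincide), while the evaluation at $q=-1$ proceeds via the same Gauss identity $\prod_{i\geq 1}\frac{1-t^i}{1+t^i}=\sum_{k\in\ZZ}(-1)^kt^{k^2}$ that the paper invokes. The only minor divergence is your Lambert-series/$q$-derivative computation of $B_n^{\circ}(1)$, but you also record the paper's direct substitution into the closed formula as an equivalent alternative, so nothing substantive differs.
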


\begin{proof}
(a) Since an analogous proof will be used in Remark~\ref{rem-A} and in Section~\ref{ssec-pf-cor-inf_prod}, 
we give here a detailed proof.
Let $X$ be a set and~$M$ be the free abelian monoid on~$X$ ($X$ is called a basis of~$M$).
We say that a function $\varphi: M \to R$ from~$M$ to a ring~$R$ is \emph{multiplicative} 
if $\varphi(uv) = \varphi(u) \varphi(v)$ for all couples $(u,v) \in M^2$ of words having no common basis element. 
Under this condition, it is easy to check the following identity:
\begin{equation}\label{eq-mult}
\sum_{w\in M}\, \varphi(w) = \prod_{x\in X} \, \left( 1 + \sum_{e\geq 1} \, \varphi(x^e) \right).
\end{equation}

Now, identifying each partition with its planar diagram, 
we consider a partition~$\lambda$ as a union of rectangular
partitions~$i^{e_i}$, with $e_i$~parts of length~$i$, for $e_i \geq 1$ and distinct $i\geq 1$,
which we denote by the formal product $\lambda = \prod_{i\geq 1} \, i^{e_i}$.
Thus the set of partitions is equal to the free abelian monoid on~$X = \NN \setminus \{0\}$. 
Before we apply~\eqref{eq-mult}, let us remark that $|\lambda| = \sum_i \, ie_i $ and $\ell(\lambda) = \sum_i\, e_i$.
Moreover, the multisets $\{e_i \, |\, i\geq 1\}$ and $\{d_i \, |\, i\geq 1\}$ are equal
(recall that the integers~$d_i$ are those associated with~$\lambda$ in~\eqref{def-d}); therefore,
$v(\lambda) = \sum_i\, 1 = \card\{i \, |\, e_i \geq 1\}$.

The function $\lambda \mapsto \card\ C_{\lambda}^y \, s^{|\lambda|}$
computed in Proposition\,\ref{prop-semi-inv} is clearly multiplicative. 
Applying~\eqref{eq-mult}, we obtain
\begin{eqnarray*}
1 + \sum_{n\geq 1}\, B_n(q) s^n
& = & 1 + \sum_{|\lambda| \geq 1} \, \card\ C_{\lambda}^y \, s^{|\lambda|} \\
& = & \prod_{i\geq 1}\, \left( 1 + \sum_{e\geq 1}\, \card\ C_{i^e}^y \, s^{ie} \right) \\
& = & \prod_{i\geq 1}\, \left( 1 +  \sum_{e \geq 1}\, (q-1) q^{ie + e-1} s^{ie} \right)\\
& = & \prod_{i\geq 1}\, \left( 1 + (q-1)q^{-1} \sum_{e \geq 1}\, (q^{i+1} s^i)^e \right)\\
& = & \prod_{i\geq 1}\, \left( 1 + (q-1)q^{-1} \frac{q^{i+1} s^i}{1 - q^{i+1} s^i} \right)\\
& = & \prod_{i\geq 1}\, \frac{(1 - q^{i+1} s^i) + (q-1)q^i s^i}{1 - q^{i+1} s^i}\\
& = & \prod_{i\geq 1}\, \frac{1-q^is^i}{1 - q^{i+1}s^i}  \, .
\end{eqnarray*}
Finally replace $s$ by~$q^{-1}t$.

(b) To compute $B_n^{\circ}(1)$ we use the formula of Corollary\,\ref{coro-ideaux-y-inverse}.
Since the values at $q = 1$ of $(q-1)^{v(\lambda)-1}$ is $1$ if $v(\lambda)=1$ and~$0$ otherwise and
since $v(\lambda)=1$ if and only if $m_1 = \cdots = m_t = d$, in which case $dt = n$,
we have 
\begin{equation*}
B_n^{\circ}(1) = \sum_{dt = n}\, 1 = \sum_{d|n} \, 1 = \sigma_0(n).
\end{equation*}

For $B_n^{\circ}(-1)$ we use the infinite product expansion of Item\,(a):
replacing $B_n(q)$ by $(q-1) q^n B_n^{\circ}(q)$, we obtain
\begin{equation*}
1 + \sum_{n\geq 1}\, (q-1) B_n^{\circ}(q) t^n = \prod_{i\geq 1}\, \frac{1-t^i}{1 - qt^i} \, .
\end{equation*}
Setting $q = -1$ yields
\begin{equation*}
1 -2 \sum_{n\geq 1}\, B_n^{\circ}(-1) t^n = \prod_{i\geq 1}\, \frac{1-t^i}{1 + t^i} \, .
\end{equation*}
Now recall the following identity of Gauss (see\,\cite[(7.324)]{Fi} or\,\cite[19.9\,(i)]{HW}):
\begin{equation}\label{eq-Gauss}
\prod_{i\geq 1}\, \frac{1-t^i}{1 + t^i} = \sum_{k\in \ZZ}\, (-1)^k t^{k^2}.
\end{equation}
It follows that 
\begin{equation*}
1 -2 \sum_{n\geq 1}\, B_n^{\circ}(-1) t^n = 1 + 2 \sum_{k \geq 1}\, (-1)^k t^{k^2},
\end{equation*}
which allows us to conclude.
\end{proof}

\begin{rem}\label{rem-A}
The results of Theorem\,\ref{th-B} should be compared to the following ones concerning 
the number~$A_n(q)$ of ideals of~$\FF_q[x,y]$ of codimension~$n$. 
Proceeding as in the proof of Theorem\,\ref{th-B}, we deduce from\,\eqref{Anq} that
\begin{equation*}
1 + \sum_{n\geq 1}\, A_n(q) s^n = \prod_{i\geq 1}\, \frac{1}{1 - q^{i+1}s^i} \, .
\end{equation*}
Setting $q = -1$, we have
\begin{equation}\label{eq-AA}
1 + \sum_{n\geq 1}\, A_n(-1) s^n = \prod_{i\geq 1}\, \frac{1}{1 - (-1)^{i+1}s^i}
= \prod_{m\geq 1}\, \frac{1}{(1 - s^{2m-1})(1 + s^{2m})} .
\end{equation}
Multiplying by $\prod_{m\geq 1}\, (1 + s^{2m})^{-1}$ both sides of the Euler identity
\[
\prod_{m\geq 1}\, \frac{1}{1 - s^{2m-1}} = \prod_{i\geq 1}\, (1+s^i)
\]
(see~\cite[(19.4.7)]{HW}),
we deduce that the right-hand side of~\eqref{eq-AA} is equal to the infinite product
\begin{equation*}
\prod_{m\geq 1}\, (1 + s^{2m-1}).
\end{equation*}%
Thus by\,\cite[Table\,14.1, p.\,310]{Ap} or\,\cite[(19.4.4)]{HW}, 
the value $A_n(-1)$ is equal to the number\footnote{See Sequence~A000700 in\,\cite{OEIS}.} 
of partitions of~$n$ with unequal odd parts.
Note that $A_n(1)$ is equal to the number\footnote{See Sequence~A000041 in\,\cite{OEIS}.} of partitions of~$n$.
See Table\,\ref{tableA} at the end for a list of the polynomials~$A_n(q)$ ($1 \leq n \leq 12$).
\end{rem}

\section{Invertible Gr\"obner cells}\label{sec-inv}

Let $\Hilb^n((\AA^1_k \setminus \{0\})^2)$ be the Hilbert scheme parametrizing finite subschemes of colength~$n$  
of the two-dimensional torus, i.e.\
of the complement of two distinct intersecting lines in the affine plane.
Its $k$-points are in bijection with the set of ideals of $k[x,y,x^{-1},y^{-1}]$  of codimension~$n$.
By Section\,\ref{ssec-loc} this set of ideals is the disjoint union over the partitions~$\lambda$ of~$n$
of the sets $C_{\lambda}^{x,y}$, where $C_{\lambda}^{x,y}$ consists of the ideals $I \in C_{\lambda}$
such that both $x$ and $y$ are invertible in~$k[x,y]/I$. 
We call $C_{\lambda}^{x,y}$ the \emph{invertible Gr\"obner cell} associated to the partition~$\lambda$. 

When the ground field is finite, so is~$C_{\lambda}^{x,y}$. 
The aim of this section is to compute the cardinality of~$C_{\lambda}^{x,y}$ when $k = \FF_q$.

\subsection{The cardinality of an invertible Gr\"obner cell}\label{ssec-invert}

Recall the non-negative integers $d_1, \ldots, d_t$ defined by\,\eqref{def-d}
and the positive integer~$v(\lambda)$ defined by\,\eqref{def-v}.
We now give a formula for~$\card\ C_{\lambda}^{x,y}$.

\begin{theorem}\label{th-inv-cell}
Let $k = \FF_q$, $n$ an integer~$\geq 1$ and $\lambda$ be a partition of~$n$. Then
\begin{equation*}
\card\ C_{\lambda}^{x,y} = 
(q-1)^{2 v(\lambda)} \, q^{n - \ell(\lambda)} \,\prod_{i= 1, \ldots, t \atop d_i \geq 1} \, \frac{q^{2d_i} - 1}{q^2 - 1} \,  . 
\end{equation*}
\end{theorem}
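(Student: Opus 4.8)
The plan is to reduce the computation of $\card\ C_{\lambda}^{x,y}$ to a product of the quantities counted in Proposition~\ref{prop-counting}. Recall that $C_{\lambda}^{x,y}$ consists of those ideals $I \in C_{\lambda}$ for which \emph{both} $x$ and $y$ are invertible modulo~$I$. The cell $C_{\lambda}$ is parametrized by the polynomial matrix $(p_{i,j}) \in T_{\lambda}$ from Section~\ref{ssec-cells}, so I would first impose the two invertibility conditions on these parameters. By Lemma~\ref{lem-quot-by-s}, invertibility of~$y$ modulo~$I$ is equivalent to $p_y(I_\lambda) = k[x]$, which by Proposition~\ref{prop-criterion-y} amounts to the single algebraic condition $a_i \neq 0$ for every index~$i$ with $d_i \geq 1$ (where $a_i = p_i(0)$). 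This already accounts for $v(\lambda)$ of the invertibility constraints and, applied alone, reproduces the count in Proposition~\ref{prop-semi-inv}. The remaining work is to understand the condition that~$x$ be invertible modulo~$I$, and to count how it cuts down the parameter space further, in combination with the $y$-condition.

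\textbf{Reformulating invertibility of~$x$.} By symmetry of the localization picture, invertibility of~$x$ modulo~$I$ should again translate, via Lemma~\ref{lem-quot-by-s}, into the surjectivity $p_x(I_\lambda) = k[y]$, where $p_x: k[x,y] \to k[y]$ sends $x \mapsto 0$. Here I would set $x = 0$ in the matrix~$M_\lambda$ of~\eqref{matrixM} and analyze the ideal of~$k[y]$ generated by the maximal minors of the resulting matrix $M_\lambda^x$. The key structural feature is that after setting $x=0$ the matrix becomes lower triangular with diagonal entries $y^{d_i} + p_i$, so the relevant minors are products of these diagonal polynomials together with off-diagonal correction terms. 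I expect the surjectivity condition $p_x(I_\lambda) = k[y]$ to decouple column-block by column-block according to the distinct values of the sequence $m_1 \le \cdots \le m_t$, i.e.\ according to the $v(\lambda)$ blocks where $d_i \geq 1$. Within each such block the condition should say that a certain monic polynomial of degree~$d_i$ (built from the entries in that block, with $P(0) \neq 0$ encoding the $y$-condition) is \emph{coprime} to a polynomial formed from the other block entries.

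\textbf{Assembling the count via Proposition~\ref{prop-counting}.} Once the two invertibility conditions are expressed blockwise, I would identify, for each index~$i$ with $d_i \geq 1$, a tuple $(P, P_1, \ldots, P_h)$ of exactly the type counted in Proposition~\ref{prop-counting}: $P$ is the monic degree-$d_i$ polynomial coming from the diagonal entry $y^{d_i}+p_i$ (with $P(0) = a_i \neq 0$ giving condition~(i)), the $P_j$'s are the free entries of degree $< d_i$ in that column below the diagonal, and the coprimality of $P$ with a combination $\sum P_j Q_j$ encodes invertibility of~$x$. Proposition~\ref{prop-counting} then contributes a factor
\[
(q-1)^2 \, q^{(h-1)d_i} \, \frac{q^{2d_i}-1}{q^2-1}
\]
for each such block, while the columns with $d_i = 0$ contribute nothing (they impose $p_{i,j}=0$). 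The free parameters that are \emph{not} constrained by either invertibility condition contribute a pure power of~$q$. The final step is bookkeeping: I would verify that collecting the $(q-1)^2$ factors over the $v(\lambda)$ blocks yields $(q-1)^{2v(\lambda)}$, that the surviving $\prod (q^{2d_i}-1)/(q^2-1)$ matches the stated product over $d_i \geq 1$, and that all the leftover powers of~$q$ combine to exactly $q^{n-\ell(\lambda)}$.

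\textbf{The main obstacle.} The delicate point, and where I expect the real work to lie, is the precise reformulation of invertibility of~$x$ as a coprimality statement fitting the hypotheses of Proposition~\ref{prop-counting}, together with verifying that the $x$-condition and the $y$-condition interact cleanly block-by-block rather than coupling across blocks. In particular I must check that the polynomials playing the role of $Q_1, \ldots, Q_h$ are genuinely coprime (as required by the proposition) and that the degrees of the free parameters $P_j$ are exactly $< d_i$, so that the combinatorics of the exponent matches. Getting the exponent of~$q$ right — reconciling $\sum_i (h_i - 1) d_i$ and the dimension $n + \ell(\lambda) = \dim T_\lambda$ of the ambient cell against the target exponent $n - \ell(\lambda)$ — will require careful tracking of which parameters are free, which are killed by the $y$-condition, and which enter the coprimality count, and this arithmetic is where an error would most easily hide.
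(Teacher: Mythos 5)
Your plan follows the same route as the paper's proof --- translate invertibility of $x$ and $y$ into conditions on the Conca--Valla parameters via Lemma~\ref{lem-quot-by-s}, then count column by column with Proposition~\ref{prop-counting} --- but the two points you defer to the end are precisely the mathematical content of the proof, and one of your structural guesses about them is wrong. First, the criterion for invertibility of~$x$ is not routine: the paper proves (Proposition~\ref{prop-criterion-x}) that $p_x(I_\lambda)=k[y]$ holds if and only if, for every~$i$, the diagonal entry $y^{d_i}+p_i$ is coprime to the specific minor~$\mu_i$ of~$M_\lambda^{x}$ on rows $i+1,\ldots,t+1$ and columns $i,\ldots,t$. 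Establishing this requires the factorization~\eqref{f(x=0)} of the minors $f_i(0,y)$ and the ideal-membership Lemma~\ref{lem-mu}, and both implications need an argument; nothing in your sketch produces this statement, so you do not yet know which coprimality condition to feed into Proposition~\ref{prop-counting}.

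Second, and more seriously, the conditions do \emph{not} ``decouple block-by-block'' as you expect: they couple across columns. To apply Proposition~\ref{prop-counting} to column~$i$ one needs the cofactors $q_{i+1,i},\ldots,q_{t+1,i}$ (playing the role of $Q_1,\ldots,Q_h$) to be coprime, and this is not automatic --- it holds only under the hypothesis that the coprimality conditions have already been imposed on all columns $j>i$ (Lemma~\ref{lem-coprime}). The correct structure is therefore a sequential count from right to left: for each admissible choice of columns $t,t-1,\ldots,i+1$, Proposition~\ref{prop-counting} with $d=d_i$ and $h=t+1-i$ gives exactly $(q-1)^2\,q^{(t-i)d_i}\,(q^{2d_i}-1)/(q^2-1)$ choices for column~$i$; the factors multiply because these conditional counts are constant, not because the constraints are independent. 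Finally, your claim that columns with $d_i=0$ ``contribute nothing'' hides a real difficulty: with such columns interspersed the sequential argument does not apply as stated, and the paper instead permutes rows and columns to extract a smaller matrix~$M_\nu$ with all $d'_i\geq 1$, obtains the product $\prod_{d_i\geq 1}(q-1)^2(q^{2d_i}-1)/(q^2-1)$ times an undetermined power~$q^c$, and then pins down $c=n-\ell(\lambda)$ by comparing degrees with $\card\ C_\lambda = q^{\,n+\ell(\lambda)}$, using that $C_\lambda^{x,y}$ is a nonempty Zariski open subset of the affine cell~$C_\lambda$. None of this bookkeeping appears in your sketch, and it is exactly where, as you say yourself, an error would most easily hide.
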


The theorem will be proved in Section\,\ref{proof-th-inv-cell}. It has the following straightforward consequences.

\begin{coro}\label{coro-inv-cell}
Let $k = \FF_q$ and $\lambda$ be a partition of~$n$. 

(a) $\card\ C_{\lambda}^{x,y}$ is a monic polynomial in~$q$ with integer coefficients; it is of degree~$n + \ell(\lambda)$.

(b) The polynomial $\card\ C_{\lambda}^{x,y}$ is divisible by~$(q-1)^2$.
The quotient 
\begin{equation*}
P_{\lambda}(q) = \frac{\card\ C_{\lambda}^{x,y}}{(q-1)^2}
\end{equation*}
is a monic polynomial in~$q$ with integer coefficients and of degree~$n + \ell(\lambda) -2$.

(c) If the partition~$\lambda$ is rectangular, i.e., if $v(\lambda) = 1$, 
in which case $d_2 = \cdots =d_t = 0$ and $d = d_1$ is a divisor of~$n$, then
\begin{equation*}
P_{\lambda}(q) = q^{n - d} \, \frac{q^{2d} - 1}{q^2 - 1}  = q^{n - d} \, \left( 1 + q^2 + \cdots + q^{2d-2} \right). 
\end{equation*}
In this case, $P_{\lambda}(1) = d$.

(d) If $v(\lambda) \geq 2$, then $P_{\lambda}(q)$ is divisible by~$(q-1)^2$, and $P_{\lambda}(1) = 0$.
\end{coro}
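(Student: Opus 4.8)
The plan is to obtain all four items by direct substitution into the formula of Theorem~\ref{th-inv-cell}, using only elementary manipulations of its three factors $(q-1)^{2v(\lambda)}$, $q^{n-\ell(\lambda)}$, and $\prod_{d_i \geq 1}(q^{2d_i}-1)/(q^2-1)$. The one identity I would invoke repeatedly is the geometric-series expansion
\[
\frac{q^{2d}-1}{q^2-1} = 1 + q^2 + q^4 + \cdots + q^{2d-2},
\]
valid for $d \geq 1$; it exhibits each such factor as a polynomial in $q$ with non-negative integer coefficients, of degree $2d-2$, taking the value $d$ at $q=1$.

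For (a), polynomiality is immediate: $(q-1)^{2v(\lambda)}$ is a polynomial, $q^{n-\ell(\lambda)}$ is a genuine monomial because $n = |\lambda| \geq \ell(\lambda)$, and the product over $d_i \geq 1$ is a polynomial by the displayed expansion; integrality of the coefficients follows since all three factors have integer coefficients. The degree is the sum of the degrees of the three factors, namely $2v(\lambda) + (n - \ell(\lambda)) + \sum_{d_i \geq 1}(2d_i - 2)$. Here I would use two facts from the Conca--Valla set-up: the telescoping identity $\sum_{i=1}^t d_i = m_t$ (since $d_i = m_i - m_{i-1}$ and $m_0 = 0$), so that $\sum_{d_i \geq 1} d_i = m_t$; and the observation that $m_t$, the height of the tallest column of the Ferrers diagram, equals the number of parts $\ell(\lambda)$. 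Then $\sum_{d_i\geq 1}(2d_i-2) = 2m_t - 2v(\lambda) = 2\ell(\lambda) - 2v(\lambda)$, and the three contributions collapse to $n + \ell(\lambda)$, as claimed.

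Items (b) and (d) are purely about the power of $(q-1)$ dividing $\card\ C_{\lambda}^{x,y}$. Since $v(\lambda) \geq 1$ for every partition, the factor $(q-1)^{2v(\lambda)}$ is divisible by $(q-1)^2$; dividing by the monic integer polynomial $(q-1)^2$ therefore yields a polynomial $P_{\lambda}(q)$ with integer coefficients, of degree $(n+\ell(\lambda)) - 2$, which gives (b). When $v(\lambda) \geq 2$ the remaining factor $(q-1)^{2v(\lambda)-2} = (q-1)^{2(v(\lambda)-1)}$ still contains $(q-1)^2$ since $v(\lambda) - 1 \geq 1$; hence $P_{\lambda}(q)$ is divisible by $(q-1)^2$ and in particular $P_{\lambda}(1) = 0$, which is (d).

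Finally, for (c) I would specialize to $v(\lambda) = 1$. Then only $i = 1$ has $d_i \geq 1$, so $d := d_1 = m_1 = \cdots = m_t$, whence $n = m_1 + \cdots + m_t = td$ shows $d \mid n$, and $\ell(\lambda) = m_t = d$. Substituting these values leaves $\card\ C_{\lambda}^{x,y} = (q-1)^2 q^{n-d}(q^{2d}-1)/(q^2-1)$, so that $P_{\lambda}(q) = q^{n-d}(q^{2d}-1)/(q^2-1) = q^{n-d}(1 + q^2 + \cdots + q^{2d-2})$, and evaluating at $q=1$ gives $P_{\lambda}(1) = d$. The only step requiring any thought is the degree computation in (a), through the identities $\sum_{d_i\geq 1} d_i = m_t$ and $m_t = \ell(\lambda)$; the rest is bookkeeping with powers of $(q-1)$, and I expect no genuine obstacle, the corollary being a direct reading of Theorem~\ref{th-inv-cell}.
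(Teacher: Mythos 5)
Your proof is correct and is exactly the argument the paper intends: the corollary is stated there as a ``straightforward consequence'' of Theorem~\ref{th-inv-cell} with no written proof, and your direct substitution---using the telescoping identity $\sum_i d_i = m_t = \ell(\lambda)$ for the degree count and tracking the power of $(q-1)$---is that straightforward verification, consistent with the identities $\ell(\lambda)=\sum_i d_i$ and $n=\sum_i (t-i+1)d_i$ used in the paper's proof of the theorem itself. No gaps.
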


\begin{rem}
The polynomials $P_{\lambda}(q)$ may have negative coefficients.
For instance, if $\lambda$ is the partition of~$4$ corresponding to $t=2$, $d_1=1$, $d_2=2$, then
\begin{equation*}
P_{\lambda}(q) = q^5 - 2q^4 + 2q^3 - 2q^2 +q.
\end{equation*}
\end{rem}

The rest of the section is devoted to the proof of Theorem\,\ref{th-inv-cell}.

\subsection{A criterion for the invertibility of~$x$}\label{ssec-criterion-invx}

In Section\,\ref{sec-semi-inv} we introduced the algebra map $p_y: k[x,y] \to k[x]$ sending $x$ to itself and $y$ to~$0$.
Similarly, let $p_x: k[x,y] \to k[x]$ be the algebra map sending $x$ to~$0$ and $y$ to itself.
Then by Lemma\,\ref{lem-quot-by-s},
the set $C_{\lambda}^{x,y}$ consists of the ideals $I \in C_{\lambda}$ such that $p_x(I) = k[y]$ and $p_y(I) = k[x]$.
We already have a criterion for $p_y(I) = k[x]$ (see Proposition\,\ref{prop-criterion-y}). 
We shall now give a necessary and sufficient condition for $p_x(I)$ to be equal to~$k[y]$.

Resuming the notation of Section\,\ref{sec-semi-inv}, we see that $p_x(I)$ can be identified with the ideal of~$k[y]$
generated by the polynomials $f_0(0,y), \ldots, f_t(0,y) \in k[y]$ 
obtained from the polynomials $f_0(x,y), \ldots, f_t(x,y)$ by setting $x=0$.
The polynomials $f_0(0,y), \ldots, f_t(0,y)$ are the maximal minors of the matrix
{\scriptsize
\begin{equation*}
M_{\lambda}^{x} = 
\begin{pmatrix}
y^{d_1} + p_1 & 0 & 0 &\cdots  & 0 & 0 &0 & \cdots & 0\\
p_{2,1} & y^{d_2} + p_2 & 0 & \cdots  & 0 & 0 & 0 & \cdots & 0\\
p_{3,1}  & p_{3,2}  & y^{d_3} + p_3  & \cdots  & 0 & 0 & 0 & \cdots & 0\\
\vdots  & \vdots & \vdots  & \ddots  & \vdots & \vdots & \vdots &  &\vdots \\
p_{i-1,1}  & p_{i-1,2} & p_{i-1,3} & \cdots  &  y^{d_{i-1}} + p_{i-1}  & 0 & 0 & \cdots & 0 \\
p_{i,1}  & p_{i,2} & p_{i,3} & \cdots  &  p_{i,i-1} & y^{d_i} + p_i & 0 & \cdots & 0 \\
p_{i+1,1}  & p_{i+1,2} & p_{i+1,3} & \cdots  &  p_{i+1,i-1}  & p_{i+1,i} & y^{d_{i+1}} + p_{i+1} & \cdots & 0 \\
\vdots  & \vdots & \vdots  & \ddots  & \vdots & \vdots & \vdots & \ddots & \vdots \\
p_{t,1}  & p_{t,2} & p_{t,3} & \cdots  &  p_{t,i-1}  & p_{t,i} & p_{t,i+1}  & \cdots & y^{d_t} + p_{t}\\
p_{t+1,1}  & p_{t+1,2} & p_{t+1,3} & \cdots  &  p_{t+1,i-1}  & p_{t+1,i} & p_{t+1,i+1}  & \cdots & p_{t+1,t} 
\end{pmatrix} 
\end{equation*}
}%
obtained from the matrix~$M_{\lambda}$ of\,\eqref{matrixM} by setting $x = 0$.

Let $\mu_i$ be the determinant of the submatrix~$M_i$ of~$M_{\lambda}^{x}$ corresponding to the rows
$(i+1), \ldots, (t+1)$ and to the columns $i, \ldots, t$. We have
$\mu_t = p_{t+1,t}$ and 
\begin{equation*}
\mu_i =
\begin{vmatrix}
\, p_{i+1,i} & y^{d_{i+1}} + p_{i+1} & \cdots & 0 \\
\vdots & \vdots & \ddots & \vdots \\
p_{t,i} & p_{t,i+1}  & \cdots & y^{d_t} + p_{t} \, \\
\, p_{t+1,i} & p_{t+1,i+1}  & \cdots & p_{t+1,t} 
\end{vmatrix}
\end{equation*}
if $1\leq i < t$.
Expanding~$\mu_i$ along its first column, we obtain
\begin{equation}\label{expand-mu}
\mu_i  = \sum_{j=1}^{t-i+1} \,  p_{i+j,i} \, q_{i+j,i} \, ,
\end{equation}
where 
\begin{equation}\label{cofacteurs-mu}
q_{i+j,i}  = \!\!
\left\{
\begin{array}{cl}
 \mu_{i+1} & \text{if} \, j=1, \\
\noalign{\smallskip}
\hskip -10pt (-1)^{j-1} \, (y^{d_{i+1}} + p_{i+1}) \cdots (y^{d_{i+j-1}} + p_{i+j-1}) \, \mu_{i+j} & \text{if} \, 1 < j < t-i+1, \\
\noalign{\smallskip}
\hskip -5pt  (-1)^{t-i} \, (y^{d_{i+1}} + p_{i+1}) \cdots (y^{d_{t-1}} + p_{t-1}) (y^{d_t} + p_t) & \text{if} \, j = t-i+1.
\end{array}
\right.
\end{equation}

Observe also that
\begin{equation}\label{f(x=0)}
f_i(0,y) = 
\left\{
\begin{array}{ccl}
\mu_1 && \text{if} \; i=0, \\
\noalign{\smallskip}
(y^{d_1} + p_1) \cdots (y^{d_i} + p_i) \, \mu_{i+1} && \text{if} \; 1 \leq i < t,\\
\noalign{\smallskip}
(y^{d_1} + p_1) \cdots (y^{d_t} + p_t) && \text{if} \; i = t.
\end{array}
\right.
\end{equation}

\begin{lemma}\label{lem-mu}
If $1 \leq i \leq  j \leq t$, then $\mu_i$ belongs to the ideal $(\mu_j, y^{d_j} + p_j)$
generated by $\mu_j$ and $(y^{d_j} + p_j)$.
\end{lemma}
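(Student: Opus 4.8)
The plan is to exhibit $\mu_j$ as a square block inside the matrix defining $\mu_i$, with the two determinants coupled only through the single entry $y^{d_j}+p_j$. Write $g_k=y^{d_k}+p_k$ for the diagonal entries of $M_\lambda^x$, and recall that $\mu_i=\det M_i$, where $M_i$ is the submatrix of $M_\lambda^x$ on the rows $i+1,\dots,t+1$ and the columns $i,\dots,t$. The only structural fact I need is that $M_\lambda^x$ is lower triangular in its first $t$ rows; that is, every entry lying strictly above its main diagonal vanishes.

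First I would split the index sets of $M_i$ at the level $j$. Partition the rows into $R_1=\{i+1,\dots,j\}$ and $R_2=\{j+1,\dots,t+1\}$, and the columns into $C_1=\{i,\dots,j-1\}$ and $C_2=\{j,\dots,t\}$. Since $|R_1|=|C_1|=j-i$ and $|R_2|=|C_2|=t-j+1$, this gives a block form $M_i=\left(\begin{smallmatrix} A & B\\ C & D\end{smallmatrix}\right)$ with $A$ and $D$ square. The crucial observation is that $D=M_i[R_2,C_2]$ is precisely the submatrix of $M_\lambda^x$ on the rows $j+1,\dots,t+1$ and the columns $j,\dots,t$, so that $\det D=\mu_j$.

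The key step is to analyze the coupling block $B=M_i[R_1,C_2]$. For a row index $r\le j$ and a column index $c\ge j$ one always has $c\ge r$, so by triangularity the corresponding entry vanishes unless $c\le r$ as well, which forces $r=c=j$. Hence the only possibly nonzero entry of $B$ is $g_j=y^{d_j}+p_j$, sitting in the bottom-left corner of $B$. Reducing everything modulo the ideal $(g_j)$ kills this entry, so the reduction $\overline{M_i}$ becomes block lower triangular and its determinant factors as $\overline{\mu_i}=\det(\overline A)\,\det(\overline D)=\det(\overline A)\,\overline{\mu_j}$. Lifting back to $k[y]$, this says $\mu_i-\det(A)\,\mu_j\in(g_j)$, that is $\mu_i\in(\mu_j,\,y^{d_j}+p_j)$, which is exactly the claim. (Equivalently, in the Leibniz expansion of $\det M_i$ every term using an entry of $B$ must use the single entry $g_j$ and is thus divisible by it, while the terms respecting the block partition sum to $\det(A)\,\mu_j$.)

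The only point demanding care is the index bookkeeping behind this decomposition: one must verify that $D$ is literally the matrix whose determinant is $\mu_j$ and that $B$ has $g_j$ as its sole nonzero entry. Both are immediate from the triangular shape of $M_\lambda^x$ once the global row and column labels are tracked correctly, but this tracking is where a sign or an off-by-one error could creep in. The degenerate cases cost nothing: when $i=j$ the blocks $A$, $B$, $C$ are empty and $\mu_i=\mu_j$, while when $d_j=0$ one has $g_j=1$, so $(\mu_j,g_j)=k[y]$ and the containment is trivial.
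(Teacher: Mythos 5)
Your proof is correct and is essentially the paper's own argument: both rest on the observation that $M_j$ occurs as the bottom-right square block of $M_i$, coupled to the remaining rows and columns only through the single entry $y^{d_j}+p_j$, so that killing this entry makes $M_i$ block lower triangular with determinant $\det(A)\,\mu_j$. The paper kills the entry by splitting the relevant column via multilinearity of the determinant, whereas you reduce modulo the ideal $(y^{d_j}+p_j)$; the two devices are interchangeable and yield the same congruence $\mu_i \equiv \det(A)\,\mu_j \pmod{y^{d_j}+p_j}$.
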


\begin{proof}
The case $i=j$ is obvious. 
Otherwise, consider the matrix~$M_i$ whose determinant is~$\mu_i$; the column of~$M_i$
containing the entry $y^{d_j} + p_j$ can be written as the sum of a column containing only the entry $y^{d_j} + p_j$,
the other entries being zero, and of a column whose top entry is zero and the bottom ones form the first column of
the matrix~$M_j$ whose determinant is~$\mu_j$. 
Therefore by the multilinearity property of determinants, 
$\mu_i$ is the sum of a determinant which is a multiple of~$y^{d_j} + p_j$
and of another determinant which is a multiple of~$\mu_j$; 
indeed, this second determinant is block-triangular with one diagonal block equal to~$\mu_j$.
\end{proof}

Here is our criterion for the invertibility of~$x$.

\begin{prop}\label{prop-criterion-x}
We have $p_x(I_{\lambda}) = k[y]$ if and only if $y^{d_i} + p_i$ and~$\mu_i$ are coprime 
for all $i = 1, \ldots, t$.
\end{prop}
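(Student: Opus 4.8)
The plan is to reformulate the statement in terms of common roots in an algebraic closure and then exploit the triangular structure recorded in \eqref{f(x=0)} together with Lemma~\ref{lem-mu}. Write $g_i = y^{d_i} + p_i$ for brevity, and fix an algebraic closure $\bar k$ of~$k$. From \eqref{f(x=0)} the generator $f_t(0,y) = g_1 \cdots g_t$ is monic of degree $m_t \geq 1$, hence nonzero; since $k[y]$ is a principal ideal domain, the ideal $p_x(I_\lambda) = (f_0(0,y), \ldots, f_t(0,y))$ equals $k[y]$ if and only if these polynomials have no common root in~$\bar k$. Likewise $g_i$ and $\mu_i$ are coprime if and only if they have no common root in~$\bar k$. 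Thus it suffices to prove that the $f_k(0,y)$ admit a common root if and only if $g_i$ and $\mu_i$ share a root for some~$i$. (Whenever $d_i = 0$ the constraints defining $T_\lambda$ force $g_i = 1$, so the coprimality condition is automatic for such indices and only the indices with $d_i \geq 1$ play a role.)

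First I would treat the implication that a shared root forces a common root. Suppose $g_i(\alpha) = \mu_i(\alpha) = 0$ for some index~$i$ and some $\alpha \in \bar k$; I claim $\alpha$ annihilates every $f_k(0,y)$. For $k = t$ and for every $1 \leq k < t$ with $k \geq i$, the factor $g_i$ divides the product $g_1 \cdots g_k$ occurring in \eqref{f(x=0)}, so $f_k(0,\alpha) = 0$. For the remaining indices $k < i$, including $k = 0$ where $f_0(0,y) = \mu_1$, the relevant factor is $\mu_{k+1}$; since $k+1 \leq i$, Lemma~\ref{lem-mu} gives $\mu_{k+1} \in (\mu_i, g_i)$, and as both generators vanish at~$\alpha$ so does $\mu_{k+1}$, whence $f_k(0,\alpha) = 0$. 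Thus $\alpha$ is a common root and $p_x(I_\lambda) \neq k[y]$.

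Conversely, I would show that a common root forces some $g_i, \mu_i$ to share a root, which requires no appeal to Lemma~\ref{lem-mu}. Let $\alpha \in \bar k$ be a common root of all $f_k(0,y)$. Since $f_t(0,\alpha) = g_1(\alpha) \cdots g_t(\alpha) = 0$, the set of indices $j$ with $g_j(\alpha) = 0$ is nonempty; let $i$ be its smallest element. By minimality $g_1(\alpha), \ldots, g_{i-1}(\alpha)$ are all nonzero. Evaluating the expression for $f_{i-1}(0,y)$ from \eqref{f(x=0)} at~$\alpha$ --- this is $\mu_1$ when $i = 1$ and $g_1 \cdots g_{i-1}\, \mu_{i}$ when $i \geq 2$ --- and combining $f_{i-1}(0,\alpha) = 0$ with $g_1(\alpha)\cdots g_{i-1}(\alpha) \neq 0$, I obtain $\mu_i(\alpha) = 0$. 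Since also $g_i(\alpha) = 0$, the polynomials $g_i$ and $\mu_i$ share the root~$\alpha$, as desired.

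The two implications together establish the equivalence. The only delicate point is the bookkeeping in the first implication, where one must split the indices $k$ according to whether $k \geq i$ or $k < i$ and invoke Lemma~\ref{lem-mu} in the latter range; the reformulation in terms of common roots in~$\bar k$ and the choice of the extremal index~$i$ make both directions otherwise routine.
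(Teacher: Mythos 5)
Your proof is correct, and it is built from the same two ingredients as the paper's: the factorization \eqref{f(x=0)} and Lemma~\ref{lem-mu}. The necessity half (a common root of $y^{d_i}+p_i$ and $\mu_i$ kills every $f_k(0,y)$) is precisely the paper's argument transcribed into the language of roots in~$\bar k$: where you split the indices into $k \geq i$ (use the factor $y^{d_i}+p_i$ in the product) and $k < i$ (use Lemma~\ref{lem-mu}), the paper makes the identical split to show that all $f_k(0,y)$ lie in the proper ideal $(\mu_i, y^{d_i}+p_i)$. The sufficiency half is where you genuinely diverge: the paper runs a descending induction computing the gcd of $f_t(0,y), f_{t-1}(0,y), \ldots$, peeling off one coprimality hypothesis at each step until the gcd collapses to~$1$, whereas you take a hypothetical common root~$\alpha$, choose the \emph{minimal} index~$i$ with $(y^{d_i}+p_i)(\alpha)=0$, and read $\mu_i(\alpha)=0$ directly off $f_{i-1}(0,\alpha)=0$. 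Your extremal-index argument is shorter and needs no induction; the paper's version has the side benefit that the explicit tail-gcd formula it establishes is recycled, by the same descending induction, in the proof of Lemma~\ref{lem-coprime}, which is needed later for Theorem~\ref{th-inv-cell}. Finally, your reduction of both conditions to the non-existence of common roots in~$\bar k$ is legitimate because $k[y]$ is a principal ideal domain and the polynomials $y^{d_i}+p_i$ and $f_t(0,y)$ are monic (or equal to~$1$), hence nonzero --- a point you correctly flag.
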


\begin{proof}
(a) Let us first check that the above condition is sufficient.
The fact that $y^{d_t} + p_t$ and~$\mu_t$ are coprime implies that by\,\eqref{f(x=0)}
the gcd of $f_t(0,y)$ and of~$f_{t-1}(0,y)$ is $(y^{d_1} + p_1) \cdots (y^{d_{t-1}} + p_{t-1})$. 
Now the gcd of the latter and of~$f_{t-2}(0,y)$ is $(y^{d_1} + p_1) \cdots (y^{d_{t-2}} + p_{t-2})$
in view of the fact that $y^{d_{t-1}} + p_{t-1}$ and~$\mu_{t-1}$ are coprime. Repeating this argument,
we find that the gcd of $f_0(0,y), \ldots, f_t(0,y)$ is~$1$, which implies that $p_x(I_{\lambda}) = k[y]$.

(b) Conversely, suppose that $y^{d_j} + p_j$ and~$\mu_j$ are not coprime for some $j$,
i.e., $(\mu_j, y^{d_j} + p_j) \neq k[y]$. 
By\,\eqref{f(x=0)} and Lemma\,\ref{lem-mu}, $f_0(0,y), \ldots, f_{j-1}(0,y)$ belong to the ideal~$(\mu_j, y^{d_j} + p_j)$.
On the other hand, again by\,\eqref{f(x=0)}, the remaining polynomials $f_j(0,y), \ldots, f_t(0,y)$ are divisible by~$y^{d_j} + p_j$,
hence belong to~$(\mu_j, y^{d_j} + p_j)$.
Therefore, $p_x(I_{\lambda}) \subseteq (\mu_j, y^{d_j} + p_j) \neq k[y]$. 
\end{proof}

For the proof of Theorem\,\ref{th-inv-cell}, we shall also need the following result.

\begin{lemma}\label{lem-coprime}
If $y^{d_j} + p_j$ and~$\mu_j$ are coprime for all $j>i$, then the polynomials $q_{i+1,i}, \ldots, q_{t+1,i}$ 
of~\eqref{cofacteurs-mu} are coprime.
\end{lemma}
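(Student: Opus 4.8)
The plan is to exploit the telescoping structure of the polynomials displayed in~\eqref{cofacteurs-mu}. Writing $g_k = y^{d_k} + p_k$ for brevity and discarding the signs (which do not affect coprimality), the polynomials under consideration are
\[
q_{i+j,i} = g_{i+1} \cdots g_{i+j-1}\, \mu_{i+j} \quad (1 \leq j \leq t-i), \qquad q_{t+1,i} = g_{i+1} \cdots g_t,
\]
where the empty product is read as~$1$, so that $q_{i+1,i} = \mu_{i+1}$. The only arithmetic input I would use repeatedly is the identity $\gcd(PA, PB) = P\,\gcd(A,B)$, valid in the polynomial ring~$k[y]$ (a principal ideal domain), together with the standing hypothesis that $\gcd(g_s, \mu_s) = 1$ for every $s > i$, and the associativity of $\gcd$.

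First I would establish, by descending induction on~$s$ running from $s = t$ down to $s = i$, the sharper statement
\[
\gcd(q_{s+1,i},\, q_{s+2,i}, \ldots, q_{t+1,i}) = g_{i+1}\, g_{i+2} \cdots g_s,
\]
where the right-hand side is the empty product~$1$ when $s = i$. The base case $s = t$ is immediate, since the list reduces to the single polynomial $q_{t+1,i} = g_{i+1} \cdots g_t$. For the inductive step, I assume the statement for $s+1$, i.e.\ that $\gcd(q_{s+2,i}, \ldots, q_{t+1,i}) = g_{i+1} \cdots g_{s+1}$. Using $q_{s+1,i} = g_{i+1} \cdots g_s\, \mu_{s+1}$ (which holds for $s \leq t-1$, the range occurring in the inductive step), I would then compute
\[
\gcd(q_{s+1,i},\, q_{s+2,i}, \ldots, q_{t+1,i}) = \gcd(q_{s+1,i},\, g_{i+1} \cdots g_{s+1}) = g_{i+1} \cdots g_s \cdot \gcd(\mu_{s+1}, g_{s+1}),
\]
the first equality by associativity of $\gcd$ and the induction hypothesis, the second by factoring out the common factor $g_{i+1} \cdots g_s$. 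Since $s + 1 > i$, the hypothesis forces $\gcd(\mu_{s+1}, g_{s+1}) = 1$, so the right-hand side collapses to $g_{i+1} \cdots g_s$, which is the claim for~$s$.

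Taking $s = i$ in the proved statement then gives $\gcd(q_{i+1,i}, \ldots, q_{t+1,i}) = 1$, which is precisely the assertion. I do not expect a genuine obstacle here; the proof is a clean telescoping induction and the only points demanding care are purely organizational: matching the index ranges of~\eqref{cofacteurs-mu} against the empty-product conventions at the two extremes (the case $q_{i+1,i} = \mu_{i+1}$ and the case $q_{t+1,i} = g_{i+1} \cdots g_t$, the latter formally corresponding to $\mu_{t+1} = 1$), and checking that at each stage the coprimality hypothesis $\gcd(\mu_{s+1}, g_{s+1}) = 1$ is invoked only for an index $s+1$ strictly larger than~$i$, which indeed holds throughout the induction.
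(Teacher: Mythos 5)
Your proof is correct and follows essentially the same route as the paper: a descending induction establishing that the gcd of the tail $q_{s+1,i},\ldots,q_{t+1,i}$ equals $(y^{d_{i+1}}+p_{i+1})\cdots(y^{d_s}+p_s)$, with the coprimality hypothesis $\gcd(\mu_{s+1}, y^{d_{s+1}}+p_{s+1})=1$ consumed at each step. The only cosmetic difference is that you run the induction one step further, down to $s=i$, whereas the paper stops at $j=i+1$ and handles the final combination with $q_{i+1,i}=\mu_{i+1}$ as a separate concluding remark; the content is identical.
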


\begin{proof}
Proceeding as in Part\,(a) of the proof of Proposition\,\ref{prop-criterion-x} and using\,\eqref{cofacteurs-mu},
one shows by descending induction on~$j$ that
the gcd of $q_{j+1,i}, \ldots, q_{t+1,i}$ is 
\[
(y^{d_{i+1}} + p_{i+1}) \cdots (y^{d_j} + p_j).
\]
In particular, for $j= i+1$, the gcd of $q_{i+2,i}, \ldots, q_{t+1,i}$ is $(y^{d_{i+1}} + p_{i+1})$.
The conclusion follows from this fact 
together with the coprimality of $(y^{d_{i+1}} + p_{i+1})$ and of $q_{i+1,i} = \mu_{i+1}$.
\end{proof}

\subsection{Proof of Theorem\,\ref{th-inv-cell}}\label{proof-th-inv-cell}

By Propositions\,\ref{prop-criterion-y} and\,\ref{prop-criterion-x}, 
it is enough to count the entries of the matrix~$M_{\lambda}$ over~$\FF_q[y]$
such that $p_i(0) \neq 0$ and $y^{d_i} + p_i$ and~$\mu_i$ are coprime for all $i = 1, \ldots t$.
We consider these conditions successively for $i= t, t-1, \ldots, 1$.

Assume first that all integers $d_1, \ldots, d_t$ are non-zero.
For $i=t$, $y^{d_t} + p_t$ is a monic polynomial of degree~$d_t$ with non-zero constant term, 
$\mu_t = p_{t+1,t}$ is of degree~$<d_t$, and both polynomials are coprime. 
It follows from Lemma\,\ref{lem-counting3} (or from Proposition\,\ref{prop-counting} with $d= d_t$ and $h=1$)
that we have $(q-1)^2 (q^{2d_t} - 1)/(q^2 -1)$ possible choices for the last column of~$M_{\lambda}$.

For $i=t-1$, it follows from\,\eqref{expand-mu} that 
$\mu_{t-1} = P_1Q_1 + P_2Q_2$, where $Q_1 = q_{t,t-1}$ and $Q_2 = - q_{t+1,t-1}$,
which are coprime by Lemma\,\ref{lem-coprime},
$P_1 = p_{t,t-1}$ and $P_2 = p_{t+1,t-1}$, which are both polynomials of degree~$<d_{t-1}$. 
The polynomial $P = y^{d_{t-1}} + p_{t-1}$ is monic of degree~$d_{t-1}$ with non-zero constant term,
and $Q = \mu_{t-1} = P_1Q_1 + P_2 Q_2$ is coprime to~$P$ by the coprimality condition. 
It then follows from Proposition\,\ref{prop-counting} applied to the case $d= d_{t-1}$ and $h=2$ that 
there are 
\[
(q-1)^2 q^{d_{t-1}} \, \frac{q^{2d_{t-1}} - 1}{q^2 -1}
\]
possible choices for the $(t-1)$-st column of~$M_{\lambda}$.

In general,  the polynomial $P = y^{d_i} + p_i$ is monic of degree~$d_{t-1}$ with non-zero constant term,
and is assumed to be coprime to $Q = \mu_i  = \sum_{j=1}^{t-i+1} \,  p_{i+j,i} \, q_{i+j,i}$. 
By  Lemma\,\ref{lem-coprime} the polynomials $q_{i+1,i}, \ldots, q_{t+1,i}$ are coprime. 
Applying Proposition\,\ref{prop-counting} to the case $d= d_i$ and $h=t+1-i$, we see that there are
\[
(q-1)^2 q^{(t-i)d_i} \, \frac{q^{2d_i} - 1}{q^2 -1}
\]
possible choices for the $i$-th column of~$M_{\lambda}$.

In the end we obtain a number of possible entries for~$M_{\lambda}$ equal to
\begin{equation*}
\prod_{i=1}^t\, (q-1)^2 q^{(t-i)d_i} \, \frac{q^{2d_i} - 1}{q^2 -1}
= q^{n- \ell(\lambda)} \, \prod_{i=1}^t\, (q-1)^2 \, \frac{q^{2d_i} - 1}{q^2 -1}
\end{equation*}
since $\ell(\lambda) = \sum_{i=1}^t \, d_i$ and $n = |\lambda| = \sum_{i=1}^t \, (t-i+1) \, d_i$.
We have thus proved the theorem when all $d_1, \ldots, d_t$ are non-zero.

Let $E$ be the subset of~$\{1, \ldots, t\}$ consisting of those subscripts~$i$
for which $d_i = 0$. (Note that $1$ does not belong to~$E$ since $d_1 > 0$.)
Assume now that $E$ is non-empty and set $t' = t - \card\ E$. By assumption~$t'< t$.
For any positive integer~$i \leq t'$, let $d'_i$ be equal to the $i$-th non-zero~$d_i$.
The integers $d'_1 = d_1$, $d'_2, \ldots d'_{t'}$ are positive.

Recall that if $i\in E$, then the $i$-th column of the matrix~$M_{\lambda}$ is zero except for the $(i,i)$-entry which is~$1$.
Permuting rows and columns, we may rearrange~$M_{\lambda}$ into a matrix~$M'_{\lambda}$ of the form
\begin{equation*}
M'_{\lambda} =
\begin{pmatrix}
M_{\nu} & 0 \\
N & I_{t-t'}
\end{pmatrix},
\end{equation*}
where $I_{t-t'}$ is an identity matrix of size~$(t-t')$.
The $(t'+1) \times t'$-matrix~$M_{\nu}$ is of the form\,\eqref{matrixM} with $t$ replaced by~$t'$,
the sequence $d_1, \ldots, d_t$ by the shorter sequence $d'_1, \ldots, d'_{t'}$,
and the partition $\lambda$ by the partition $\nu$ associated to the sequence $d'_1, \ldots, d'_{t'}$.

Let $f'_i$ be the determinant of the square matrix obtained from~$M'_{\lambda}$ by deleting its $(i+1)$-st row.
It is clear that up to sign and to reordering the maximal minors $f'_0, \ldots, f'_t$ of~$M'_{\lambda}$ 
are the same as those of~$M_{\lambda}$.
In view of the special form of~$M'_{\lambda}$, observe that
\begin{equation*}
f'_i =
\left\{
\begin{array}{ccl}
f_i^{(\nu)} && \text{if}\; 0 \leq i \leq t',Ê\\
\noalign{\smallskip}
0 && \text{if}\; t' < i \leq t.
\end{array}
\right.
\end{equation*}
where $f_i^{(\nu)}$ is the determinant of the $t'\times t'$-matrix obtained from~$M_{\nu}$ by deleting its $(i+1)$-st row.

The number of possible entries of~$M_{\lambda}$, which is the same as the number of possible entries of~$M'_{\lambda}$,
is then the product of the number of possible entries of~$N$, which is a power of~$q$, and 
of the number of possible entries of~$M_{\nu}$. Since $d'_1, \ldots, d'_{t'}$ are positive, by the first part of the proof,
we know that the number of possible entries of~$M_{\nu}$ is the product of a power of~$q$ by
\begin{equation*}
\prod_{i=1}^{t'}\, (q-1)^2 \, \frac{q^{2d'_i} - 1}{q^2 -1} \, .
\end{equation*}
In other words, the number of possible entries of~$M_{\lambda}$ is
\begin{equation*}
q^c \, \prod_{i= 1, \ldots, t \atop d_i \geq 1} \, (q-1)^2 \, \frac{q^{2d_i} - 1}{q^2 - 1}
\end{equation*}
for some non-negative integer~$c$. 
Now since the invertible Gr\"obner cell~$C_{\lambda}^{x,y}$ is a Zarisky open subset of the affine Gr\"obner cell~$C_{\lambda}$, 
the degree of the previous polynomial in~$q$ must be the same as the degree of the cardinal of~$C_{\lambda}$, 
which is~$q^{n+\ell(\lambda)}$ by Section\,\ref{ssec-cells}. This suffices to establish that $c = n - \ell(\lambda)$
and to complete the proof of the theorem.

\subsection{Proof of Theorem\,\ref{th-Cnq}}\label{ssec-pf-Cnq}

By our remark at the beginning of Section\,\ref{sec-inv}, 
the number~$C_n(q)$ of ideals of $\FF_q[x,y,x^{-1},y^{-1}]$ of codimension~$n$ is given by
\begin{equation}\label{eq-Cnq}
C_n(q) = \sum_{\lambda \,\vdash n} \, \card\ C_{\lambda}^{x,y},
\end{equation}
where $C_{\lambda}^{x,y}$ is the \emph{invertible Gr\"obner cell} associated to the partition~$\lambda$. 
The equality in Theorem\,\ref{th-Cnq} follows then from the formula for $\card\ C_{\lambda}^{x,y}$
given in Theorem\,\ref{th-inv-cell}.

By Corollary\,\ref{coro-inv-cell}\,(a) $\card\ C_{\lambda}^{x,y}$ is 
a monic polynomial which has integer coefficients and whose degree is $n + \ell(\lambda)$.
Therefore, $C_n(q)$ has integer coefficients and its degree is $\max\{ n + \ell(\lambda) \, |\, \lambda \vdash n\}$.
Now $\ell(\lambda)$ is maximal if and only if $\lambda = 1^n$, in which case $\ell(\lambda) = n$.
Therefore $C_n(q)$ is monic and its degree is~$2n$.

Since $\nu(\lambda) \geq 1$, it follows from the formula in Theorem\,\ref{th-inv-cell} that $\card\ C_{\lambda}^{x,y}$
is divisible by $(q-1)^2$ for each invertible Gr\"obner cell. Therefore, the polynomial~$C_n(q)$
is divisible by~$(q-1)^2$.

\section{Proofs of the corollaries}\label{sec-proof-main}

We now start the proofs of Corollary\,\ref{cor-Pnq} and of Corollary\,\ref{cor-inf_prod}.

\subsection{Proof of Corollary\,\ref{cor-Pnq}}\label{ssec-pf-Pnq}
Since $C_n(q)$ and $(q-1)^2$ are both monic with integer coefficients, so is~$P_n(q)$.
The latter is the sum over all partitions of~$n$ of the polynomials~$P_{\lambda}(q)$
(introduced in Corollary\,\ref{coro-inv-cell}\,(b)).
By Corollary\,\ref{coro-inv-cell}\,(c)--(d), we have $P_{\lambda}(1) = 0$ if $v(\lambda) \geq 2$ and,
if $v(\lambda) = 1$, then $\lambda$ is of the form $t^d$, where $dt=n$, in which case
$P_{\lambda}(1) = d$. The desired formula for~$P_n(1)$ follows.

\subsection{Proof of Corollary\,\ref{cor-inf_prod}}\label{ssec-pf-cor-inf_prod}

As in the proof of Theorem\,\ref{th-B} we consider each partition~$\lambda$ as a union of rectangular
partitions~$i^{e_i}$, with $e_i$~parts of length~$i$, for $e_i \geq 1$ and distinct $i\geq 1$.
Recall that $|\lambda| = \sum_i \, ie_i $, $\ell(\lambda) = \sum_i\, e_i$, and $v(\lambda) = \sum_i\, 1$.
To indicate the dependance of~$e_i$ on~$\lambda$, we write $e_i = e_i(\lambda)$.
We then obtain the following statement.

\begin{prop}\label{prop-gf1}
We have the infinite product expansion
\begin{equation*}
1 + \sum_{\lambda} \, \card\ C_{\lambda}^{x,y} \, s_1^{e_1(\lambda)} \, s_2^{e_2(\lambda)} \cdots
= \prod_{i\geq 1}\, \frac{(1-q^i s_i)^2}{(1-q^{i+1} s_i)(1-q^{i-1} s_i)} \,  .
\end{equation*}
\end{prop}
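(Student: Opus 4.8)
The plan is to mimic the proof of Theorem~\ref{th-B}\,(a): after rewriting the cardinality $\card\ C_{\lambda}^{x,y} = (q-1)^2 P_{\lambda}(q)$ from Theorem~\ref{th-inv-cell} as a product indexed by the distinct part-lengths of~$\lambda$, the sought generating function will follow from the multiplicativity identity~\eqref{eq-mult} together with a routine geometric-series computation. The key observation that makes everything work is that $\card\ C_{\lambda}^{x,y}$ factors cleanly over the multiplicities $e_i(\lambda)$.

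First I would substitute $|\lambda| = \sum_i i\,e_i(\lambda)$ and $\ell(\lambda) = \sum_i e_i(\lambda)$ into the formula of Theorem~\ref{th-inv-cell}, so that $|\lambda| - \ell(\lambda) = \sum_i (i-1)\,e_i(\lambda)$. Recalling from the proof of Theorem~\ref{th-B} that the nonzero entries of the multiset $\{d_i\}$ of~\eqref{def-d} coincide with the nonzero multiplicities $\{e_i(\lambda)\}$, and that $v(\lambda) = \card\{i : e_i(\lambda) \geq 1\}$ by~\eqref{def-v}, the three factors $(q-1)^{2v(\lambda)}$, $q^{|\lambda|-\ell(\lambda)}$ and $\prod_{d_i \geq 1}(q^{2d_i}-1)/(q^2-1)$ all reorganize into a single product over the distinct part-lengths:
\begin{equation*}
(q-1)^2 P_{\lambda}(q) = \prod_{i \,:\, e_i(\lambda) \geq 1} \, q^{(i-1)e_i(\lambda)}\,(q-1)^2\,\frac{q^{2e_i(\lambda)}-1}{q^2-1}.
\end{equation*}
Since each factor depends only on~$i$ and on $e_i(\lambda)$, the function $\lambda \mapsto (q-1)^2 P_{\lambda}(q)\, \prod_i s_i^{e_i(\lambda)}$ is multiplicative on the free abelian monoid of partitions with basis $\NN\setminus\{0\}$. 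Applying~\eqref{eq-mult} then gives
\begin{equation*}
1 + \sum_{\lambda}\,(q-1)^2 P_{\lambda}(q)\,s_1^{e_1(\lambda)} s_2^{e_2(\lambda)}\cdots = \prod_{i\geq 1}\,\left(1 + \sum_{e\geq 1}\, q^{(i-1)e}\,(q-1)^2\,\frac{q^{2e}-1}{q^2-1}\, s_i^e\right).
\end{equation*}

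It then remains to evaluate the inner sum for each fixed~$i$. Writing $z = q^{i-1}s_i$ and using $(q-1)^2/(q^2-1) = (q-1)/(q+1)$, the sum $\sum_{e\geq 1}(q^{2e}-1)z^e$ splits into two geometric series whose difference is $(q^2-1)z/[(1-q^2z)(1-z)]$; multiplying by $(q-1)/(q+1)$ and adding~$1$ produces a single fraction whose numerator collapses to the perfect square $(1-qz)^2$. Substituting back $z = q^{i-1}s_i$ recovers exactly the factor $(1-q^i s_i)^2 / [(1-q^{i+1}s_i)(1-q^{i-1}s_i)]$, which completes the proof. There is no genuine obstacle here: the only step demanding care is the bookkeeping that turns $q^{|\lambda|-\ell(\lambda)}$ and the product over $d_i \geq 1$ into a product over the distinct part-lengths, since this is precisely what renders the weighted cell-count multiplicative and licenses the use of~\eqref{eq-mult}.
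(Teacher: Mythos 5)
Your proposal is correct and follows essentially the same route as the paper: both rewrite the cell cardinality of Theorem~\ref{th-inv-cell} as a product over the distinct part-lengths via the identification of the nonzero $d_i$'s with the nonzero multiplicities $e_i(\lambda)$, apply the multiplicativity identity~\eqref{eq-mult}, and then collapse the resulting geometric series so that the numerator becomes the perfect square $(1-q^i s_i)^2$. The only cosmetic difference is your substitution $z = q^{i-1}s_i$, whereas the paper manipulates the two geometric series $q^{i\pm 1}s_i/(1-q^{i\pm 1}s_i)$ directly; the computations are identical.
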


\begin{proof}
Proceeding as in the proof of Theorem\,\ref{th-B} and using Theorem\,\ref{th-inv-cell}, we deduce
that the left-hand side is equal to
\begin{equation*}
1 + \sum_{\lambda} \, \prod_{i\geq 1}\, (q-1)^2 \, \frac{q^{2e_i} -1}{q^2 - 1} q^{ie_i - e_i} \, s_i^{e_i} ,
\end{equation*}
which in turn is equal to
\begin{eqnarray*}
 \prod_{i\geq 1} & & \hskip -22pt
 \left( 1 +  \frac{(q - 1)^2}{q^2 - 1} \, \sum_{e_i \geq 1} \left( (q^{i+1} s_i)^{e_i} -  (q^{i-1} s_i)^{e_i} \right) \right) \\
& = & \prod_{i\geq 1} \, \left( 1 +  \frac{(q - 1)^2}{q^2 - 1} \, 
\left( \frac{q^{i+1} s_i}{1 - q^{i+1} s_i} -  \frac{q^{i-1} s_i}{1 - q^{i-1} s_i} \right) \right) \\
& = & \prod_{i\geq 1} \, \left( 1 +  \frac{(q - 1)^2}{q^2 - 1} \, \frac{(q^2-1) q^{i-1} s_i}{(1 - q^{i+1} s_i)(1 - q^{i-1} s_i)}  \right) \\
& = & \prod_{i\geq 1}\, \left( 1 + \frac{(q-1)^2 q^{i-1} s_i}{(1-q^{i+1} s_i)(1-q^{i-1} s_i)}  \right) \\
& = & \prod_{i\geq 1}\, \frac{(1-q^i s_i)^2}{(1-q^{i+1} s_i)(1-q^{i-1} s_i)} \, .
\end{eqnarray*}
\end{proof}

\begin{proof}[Proof of Corollary\,\ref{cor-inf_prod}]
(a) Replace $s_i$ by $(t/q)^i$ in Proposition\,\ref{prop-gf1}, use\,\eqref{eq-Cnq} and Theorem\,\ref{th-Cnq},
and observe that $(1-q t^i)(1-q^{-1} t^i) = 1-(q+q^{-1})t^i +  t^{2i}$.

(b) The infinite product is clearly invariant under the transformation $q \leftrightarrow q^{-1}$;
thus, $C_n(q^{-1}) = q^{-2n} \, C_n(q)$. Together with $\deg\ C_n(q) = 2n$, this implies that
$C_n(q)$ is palindromic.
The polynomial~$P_n(q)$ is palindromic as a quotient of two palindromic polynomials.
\end{proof}

\subsection{An alternative proof of Corollary\,\ref{cor-inf_prod}\,(a)}\label{ssec-altern}

After we made public a first version of this article, 
we learnt of an alternative geometric approach to the polynomials~$C_n(q)$. 
Indeed, G\"ottsche and Soergel determined the mixed Hodge structure of 
the punctual Hilbert schemes of any smooth complex algebraic surface (see\,\cite[Th.~2]{GS}). 
Applying their result to the Hilbert scheme 
$H^n_{\CC} = \Hilb^n ( \CC^{\times} \times \CC^{\times})$ 
of $n$~points of the complex two-dimensional torus, 
Hausel, Letellier and Rodriguez-Villegas observed in\,\cite[Th.\,4.1.3]{HLR2}
that the compactly supported mixed Hodge polynomial
$H_c(H^n_{\CC}; q,u)$ of~$H^n_{\CC}$ fits into the equality of formal power series
\begin{equation}\label{Hodge}
1 + \sum_{n\geq 1} \, H_c(H^n_{\CC}; q,u) \, \frac{t^n}{q^n} 
= \prod_{i\geq 1} \, \frac{(1+ u^{2i+1}t^i)^2}{(1 - u^{2i+2}qt^i) (1 - u^{2i}q^{-1}t^i)} \, .
\end{equation}
Setting $u = -1$ in\,\eqref{Hodge}, we obtain an infinite product expansion for the generating function 
of the $E$-polynomial $E(H^n_{\CC}; q) = H_c(H^n_{\CC}; q,-1)$ of~$H^n_{\CC}$, namely
\begin{equation}\label{eq-HLR13}
1 + \sum_{n\geq 1} \, E(H^n_{\CC}; q) \, \frac{t^n}{q^n} 
= \prod_{i\geq 1} \, \frac{(1-t^i)^2}{1 - (q+ q^{-1}) t^i +  t^{2i}} \, .
\end{equation}
Now, $H^n_{\CC}$ is strongly polynomial-count in the sense of Nick Katz (see\,\cite[Appendix]{HR}),
probably a well-known fact (which also follows from the computations in the present paper).
Therefore, by~\cite[Th.\,6.1.2]{HR} the $E$-polynomial counts the number of elements of~$H^n$
over the finite field~$\FF_q$, which is also the number~$C_n(q)$ of ideals of codimension~$n$
of~$\FF_q[x,y,x^{-1}, y^{-1}]$. 
Thus \eqref{eq-HLR13} implies the equality of Corollary\,\ref{cor-inf_prod}\,(a).

\begin{rem}
In the same vein as above, there is a geometric explanation of the palindromicity of the polynomials~$C_n(q)$.
In~\cite{CHM} de Cataldo, Hausel, Migliorini observed that any diffeomorphism between $\CC^{\times} \times \CC^{\times}$
and the cotangent bundle $E \times \CC$ of the elliptic curve $E = \CC/\ZZ[i]$ 
induces a linear isomorphism of graded vector spaces
between the cohomology groups of the corresponding Hilbert schemes:
$H^*(H^n_{\CC}, \QQ) \cong H^*(\Hilb^n(E\times \CC), \QQ)$.
This isomorphism does not preserve the mixed Hodge structures, as the one on the right-hand side is pure 
whereas the one on the left-hand side is not. 
Nevertheless, such an isomorphism identifies the weight filtration on $H^*(H^n_{\CC}, \QQ) $
with the perverse Leray filtration on $H^*(\Hilb^n(E\times \CC), \QQ)$ 
associated to the natural projective map from $\Hilb^n(E\times \CC)$ to the $n$-th symmetric product of~$\CC$
induced by the projection of~$E\times \CC$ on the second factor.
The perverse Leray filtration is ``palindromic'' as a consequence of the relative hard Lefschetz theorem for the map above
(see~\cite[Th.\,4.1.1 and Th.\,4.3.2]{CHM}). 

Note that Hausel, Letellier and Rodriguez-Villegas observed a similar palindromicity for the $E$-polynomial of
certain character varieties  and termed it ``curious Poincar\'e duality'' in\,\cite[Cor.\,5.2.4]{HLR1}
(see also \cite[Cor.\,3.5.3]{HR}, \cite[Cor.\,1.4]{HLR0}).
\end{rem}

\begin{rem}
The natural action of the group $\CC^{\times} \times \CC^{\times}$ on itself induces
an action on the Hilbert scheme~$H^n_{\CC}$. Consider the GIT quotient
$\widetilde{H}^n_{\CC} = H^n_{\CC} \, /\!/ (\CC^{\times} \times \CC^{\times})$.
Using~\cite[Th.\,2.2.12]{HR} and \cite[Sect.\,5.3]{HLR1}, 
we see that the $E$-polynomial of~$\widetilde{H}^n_{\CC}$ is given by
\[
E(\widetilde{H}^n_{\CC}; q) = E(H^n_{\CC}; q)/(q-1)^2 = C_n(q)/(q-1)^2 = P_n(q).
\]
Recall from the introduction (see also the appendix below) that the coefficients of~$P_n(q)$ are all non-negative.
Therefore, $\widetilde{H}^n_{\CC}$ provides an example of a polynomial-count variety with odd cohomology 
and a counting polynomial with non-negative coefficients.
This implies non-trivial cancellation for the mixed Hodge numbers of~$\widetilde{H}^n_{\CC}$.
No similar positivity phenomenon was observed for the character varieties investigated 
by Hausel, Letellier and Rodriguez-Villegas.
\end{rem}

\appendix

\section{The coefficients of the polynomials~$C_n(q)$ and $P_n(q)$}\label{app}

We now state the results of the companion paper\,\cite{KRinf} on 
the coefficients of the polynomials~$C_n(q)$ and $P_n(q)$.

Since $C_n(q)$ and $P_n(q)$ are palindromic of respective degrees $2n$ and $2n-2$,
we may expand $C_n(q)$ and $P_n(q)$ as follows:
\begin{equation*}\label{def-cni}
C_n(q) = c_{n,0} \, q^n + \sum_{i=1}^n \, c_{n,i} \, \left( q^{n+i} + q^{n-i} \right),
\end{equation*}
where $c_{n,0}, c_{n,1}, c_{n,2}\ldots$ are integers, and
\begin{equation*}\label{def-ani}
P_n(q) = a_{n,0} \, q^{n-1} + \sum_{i=1}^{n-1} \, a_{n,i} \, \left( q^{n+i-1} + q^{n-i+1} \right),
\end{equation*}
where $a_{n,0}, a_{n,1}, a_{n,2}\ldots$ are integers.

By Theorem\,1.1 of\,\cite{KRinf} the coefficients $c_{n,i}$ of~$C_n(q)$ are given by the following formulas:
(a) For the central coefficients $c_{n,0}$ we have
\begin{equation*}
c_{n,0} =
\left\{
\begin{array}{cl}
2\, (-1)^k  & \text{if}\;\, n = k(k+1)/2 \;\; \text{for some integer}\; k \geq 1, 
\\
\noalign{\smallskip}
0 & \text{otherwise.}
\end{array}
\right.
\end{equation*}
(b) For the non-central coefficients ($i\geq 1$) we have
\begin{equation*}
c_{n,i} =
\left\{
\begin{array}{cl}
(-1)^k & \text{if}\;\, n = k(k+2i +1)/2 \;\; \text{for some integer}\; k  \geq 1, \\
\noalign{\smallskip}
(-1)^{k-1} & \text{if}\;\, n = k(k+2i -1)/2 \;\; \text{for some integer}\; k \geq 1, \\
\noalign{\smallskip}
0 & \text{otherwise.} 
\end{array}
\right.
\end{equation*}
Note that in Item\,(b) the first two conditions are mutually exclusive.

As for the coefficients of~$P_n(q)$, the coefficient~$a_{n,i}$ is by \cite[Th.\,1.2]{KRinf}
equal to the number of divisors~$d$ of~$n$ such that
\begin{equation*}
\frac{i+ \sqrt{2n+i^2}}{2} < d \leq i+ \sqrt{2n+i^2}.
\end{equation*}
It follows that all coefficients~$a_{n,i}$ of~$P_n(q)$ are non-negative integers.

\section*{Acknowledgement}

We are grateful to Olivier Benoist, Fran\c cois Bergeron, Mark Haiman, Emma\-nuel Letellier and Luca Migliorini
for useful discussions, and to Pierre Baumann for suggesting the proof of Lemma\,\ref{lem-counting3}.

The second-named author is grateful to the Universit\'e de Strasbourg for the invited professorship
which allowed him to spend the month of June~2014 at IRMA; 
he was also supported by NSERC (Canada).


\begin{table}[ht]
\caption{\emph{The polynomials $C_n(q)$}}\label{tableC}
\renewcommand\arraystretch{1.25}
\noindent\[
\begin{array}{|c||c|}
\hline
n & C_n(q)  \\
\hline\hline
1 & q^2 - 2q + 1  \\ 
\hline
2 & q^4 - q^3 - q + 1   \\
\hline
3 & q^6 - q^5 - q^4 + 2 q^3 - q^2 - q + 1   \\ 
\hline
4 & q^8 - q^7- q + 1   \\ 
\hline
5 & q^{10} - q^9 - q^7+ q^6 +  q^4 - q^3 - q + 1    \\ 
\hline
6 & q^{12} - q^{11} + q^7 - 2 q^6 + q^5 - q + 1  \\ 
\hline
7 & q^{14} - q^{13} - q^{10} + q^9 + q^5 - q^4 - q + 1    \\ 
\hline
8 &  q^{16} - q^{15} - q + 1  \\ 
\hline
9 &  q^{18} - q^{17} - q^{13} + q^{12} + q^{11} -  q^{10} -  q^8 + q^7+ q^6 - q^5 - q + 1   \\
\hline
10 &  q^{20} - q^{19} - q^{11} + 2q^{10} -  q^9 - q + 1   \\
\hline
11 &  q^{22} - q^{21} - q^{16} + q^{15} + q^7 - q^6 - q + 1   \\
\hline
12 &  q^{24} - q^{23} + q^{15} - q^{14} -  q^{10} + q^9 - q + 1   \\
\hline
\end{array}
\]
\end{table}

\begin{table}[ht]
\caption{\emph{The polynomials $P_n(q)$}}\label{tableP}
\renewcommand\arraystretch{1.3}
\noindent\[
\begin{array}{|c||c|c|}
\hline
n & P_n(q) & P_n(1) \\
\hline\hline
1 & 1 & 1  \\ 
\hline
2 & q^2 + q + 1 & 3  \\
\hline
3 & q^4 + q^3 + q + 1 & 4  \\ 
\hline
4 & q^6 + q^5+ q^4 + q^3 + q^2 + q + 1 & 7 \\ 
\hline
5 & q^8 + q^7+ q^6 + q^2 + q + 1 & 6  \\ 
\hline
 & q^{10} + q^9 + q^8 + q^7+ q^6  &   \\ 
6 & + 2q^5+ q^4 + q^3 + q^2 + q + 1& 12  \\ 
\hline
7 &  q^{12} + q^{11} + q^{10} + q^9 + q^3 + q^2 + q + 1 & 8  \\ 
\hline
 &  q^{14} + q^{13} + q^{12} + q^{11} + q^{10} + q^9 + q^8   &  \\ 
8 &  + q^7 + q^6 + q^5+ q^4 + q^3 + q^2 + q + 1 & 15 \\ 
\hline
 &  q^{16} + q^{15}+ q^{14} + q^{13} + q^{12} + q^9    &  \\ 
9 &  + q^8 + q^7+ q^4 + q^3 + q^2 + q + 1  & 13  \\
\hline
 &  q^{18} + q^{17}+ q^{16} + q^{15}+ q^{14} + q^{13}   &  \\ 
  &  + q^{12} + q^{11}  + q^{10} + q^8 + q^7+ q^6   &   \\
10 &   + q^5 + q^4 + q^3 + q^2 + q + 1  & 18  \\
\hline
 &  q^{20} + q^{19}+ q^{18} + q^{17}+ q^{16} + q^{15}    &  \\ 
11 &   + q^5 + q^4 + q^3 + q^2 + q + 1  & 12  \\
\hline
 &  q^{22} + q^{21}+ q^{20} + q^{19}+ q^{18} + q^{17}  + q^{16} + q^{15} &   \\ 
 &  + q^{14} + 2 q^{13} + 2 q^{12} +  2q^{11}  + 2q^{10}  + 2 q^9 + q^8  &   \\ 
12 &   + q^7+ q^6 + q^5 + q^4 + q^3 + q^2 + q + 1  & 28  \\
\hline
\end{array}
\]
\end{table}

\begin{table}[ht]
\caption{\emph{The polynomials $B_n^{\circ}(q)$}}\label{tableB}
\renewcommand\arraystretch{1.3}
\noindent\[
\begin{array}{|c||c|c|c|}
\hline
n & B_n^{\circ}(q) & B_n^{\circ}(1) & B_n^{\circ}(-1) \\
\hline\hline
1 & 1 & 1 & 1 \\ 
\hline
2 & q+1 & 2 & 0 \\ 
\hline
3 & q^2+q & 2 & 0 \\ 
\hline
4 & q^3 + q^2+q & 3 & -1 \\ 
\hline
5 & q^4 + q^3 + q^2 - 1 & 2 & 0 \\ 
\hline
6 & q^5 + q^4 + q^3 + q^2 & 4 & 0 \\ 
\hline
7 &  q^6 + q^5 + q^4 + q^3 - q - 1 & 2 & 0 \\ 
\hline
8 &  q^7 + q^6 + q^5 + q^4 + q^3 - q & 4 & 0 \\ 
\hline
9 &  q^8 + q^7 + q^6 + q^5 + q^4 - q^2 - q & 3 & 1 \\ 
\hline
10 &  q^9 + q^8 + q^7 + q^6 + q^5 + q^4 - q^2 - q & 4 & 0 \\ 
\hline
11 &  q^{10} + q^9 + q^8 + q^7 + q^6 + q^5 - q^3 - 2q^2 - q & 2 & 0 \\ 
\hline
12 &  q^{11} + q^{10} + q^9 + q^8 + q^7 + q^6 + q^5 - q^3 - q^2 + 1 & 6 & 0 \\ 
\hline
\end{array}
\]
\end{table}

\begin{table}[ht]
\caption{\emph{The polynomials $A_n(q)$}}\label{tableA}
\renewcommand\arraystretch{1.3}
\noindent\[
\begin{array}{|c||c|c|c|}
\hline
n & A_n(q) & A_n(1) & A_n(-1) \\
\hline\hline
1 & q^2 & 1 & 1 \\ 
\hline
2 & q^4 + q^3 & 2 & 0 \\ 
\hline
3 & q^6 + q^5 + q^4 & 3 & 1 \\ 
\hline
4 & q^8 + q^7+ 2q^6 + q^5 & 5 & 1 \\ 
\hline
5 & q^{10} + q^9+ 2q^8 + 2q^7+ q^6 & 7 & 1 \\ 
\hline
6 & q^{12} + q^{11} + 2q^{10} + 3q^9+ 3q^8 + q^7 & 11 & 1 \\ 
\hline
7 &  q^{14} + q^{13} + 2q^{12} + 3q^{11} + 4q^{10} + 3q^9+ q^8 & 15 & 1 \\ 
\hline
8 &  q^{16} + q^{15} + 2q^{14} + 3q^{13} + 5q^{12} + 5q^{11} + 4q^{10} + q^9 & 22 & 2 \\ 
\hline
 &  q^{18} + q^{17} + 2q^{16} + 3q^{15} +  &  &  \\ 
9 &  + 5q^{14} + 6q^{13} + 7q^{12} + 4q^{11} + q^{10} & 30 & 2 \\ 
\hline
 &  q^{20} + q^{19} + 2q^{18} + 3q^{17} + 5q^{16} +  &  &  \\ 
10 &  + 7q^{15} + 9q^{14} + 8q^{13} + 5q^{12} + q^{11} & 42 & 2 \\ 
\hline
 &  q^{22} + q^{21} +  2q^{20} + 3q^{19} + 5q^{18} +  &  &  \\
11 &  + 7q^{17} + 10q^{16} + 11q^{15} + 10q^{14} + 5q^{13} + q^{12} & 56 & 2 \\ 
\hline
 &   q^{24} + q^{23} +  2q^{22} + 3q^{21} +  5q^{20} + 7q^{19} +   &  &  \\
12 &  + 11q^{18} + 13q^{17} + 15q^{16} + 12q^{15} + 6q^{14} + q^{13} & 77 & 3 \\ 
\hline
\end{array}
\]
\end{table}


\end{document}